\documentclass[a4paper,reqno,12pt]{amsart}


\usepackage{titlesec}
\usepackage{amsmath,amsthm,amsfonts}
\usepackage{fullpage}
\usepackage{t1enc}
\usepackage{enumerate}
\usepackage{epsfig}
\usepackage{amssymb}
\usepackage{graphicx}
\usepackage{listings}
\usepackage{gensymb}
\usepackage{tikz}
\usepackage{tikz-cd}
\usepackage[utf8]{inputenc}
\usepackage[utf8]{luainputenc}
\usepackage[english]{babel}
\usepackage[top=1.5in,bottom=1.2in,left=1in,right=1in]{geometry}
\usepackage{mathtools}
\usepackage{pgf}
\usepackage[colorlinks=true, linkcolor=teal, citecolor=blue]{hyperref}
\usepackage{cleveref}
\usepackage{faktor}
\usepackage{xcolor}
\usepackage{xfrac}
\usepackage[all,cmtip]{xy}
\usepackage{xymtex}
\usepackage{xymtx-pdf}
\usepackage{MnSymbol}

\setlength{\topmargin}{0mm}
\setlength{\textheight}{9.0in}
\setlength{\oddsidemargin}{.1in}
\setlength{\evensidemargin}{.1in}
\setlength{\textwidth}{6.0in}

\titleformat{\section}
  {\centering\normalfont\scshape\fontsize{12}{17}\bfseries}
  {\thesection}
  {1em}
  {}
\titleformat{\subsection}
  {\normalfont\fontsize{12}{14}\bfseries}
  {\thesubsection}
  {1em}
  {}

\newcommand{\D}[1]{\mathbb{D}^{#1}}

\newcommand{\Cp}[1]{\mathbb{C}\mathrm{P}^{#1}}
\newcommand{\Hp}[1]{\mathbb{H}\mathrm{P}^{#1}}
\newcommand{\Op}[1]{\mathbb{O}\mathrm{P}^{#1}}

\newcommand{\N}{\mathbb{N}}
\newcommand{\Z}{\mathbb{Z}}

\newcommand{\Sp}[1]{\mathbb{S}^{#1}}

\newcommand{\wi}[1]{\widetilde{#1}}
\newcommand{\smallsigma}{\scalebox{0.7}{\Sigma}}

\newcommand{\cc}[1]{\mathcal{C}{\left(#1\right)}}
 \newcommand{\csum}{{\#}}


\newcommand{\Th}[1]{\Theta_{#1}}

\newcommand{\dis}{\displaystyle}
\newcommand{\lr}{\longrightarrow}

\newcommand{\opl}[1]{\oplus{_{#1}}}
\newcommand{\kr}[1]{\mathrm{Ker}{\left(#1\right)}}
\newcommand{\ckr}[1]{\mathrm{Coker}{\left(#1\right)}}

\newcommand{\im}[1]{\mathrm{Im}{\left(#1\right)}}

\newcommand{\pii}[1]{\pi_{#1}}


\newtheorem{thm}{Theorem}[section]

\newtheorem{cor}[thm]{Corollary}
\newtheorem{propn}[thm]{Proposition}
\newtheorem{obs}[thm]{Observation}
\newtheorem{defn}{Definition}[section]
\newtheorem{rem}[thm]{Remark}

\numberwithin{equation}{section}

\Crefname{lemm}{Lemma}{Lemmas}
\crefname{lemm}{lemma}{lemmas}
\crefname{thm}{theorem}{theorems}
\Crefname{thm}{Theorem}{Theorems}


\title{Concordance structure set of connected sum of projective spaces}

\numberwithin{equation}{section}
\begin{document}

\begin{abstract}
In this paper, the concordance structure set of connected sums of complex and quaternionic projective spaces in the real $n$-dimensional range with $8\leq n\leq 16$ is computed. It is demonstrated that the concordance inertia group of a connected sum equals the sum of individual concordance inertia groups. Furthermore, the concordance structure sets of manifolds and their connected sums are compared.

\end{abstract}


	
	

 \author{Priyanka Magar-Sawant}
	
	\address{Department of Mathematics,
		Indian Institute Of Technology Bombay, India, 400076.}
\email{priyanka.ms.math@gmail.com}

	\subjclass [2020] {Primary : {57N70, 57R55; Secondary : 55P10, 55P42, 57Q60}}
	
 \keywords{Connected sum of manifolds, projective spaces, concordance inertia group, homotopy inertia group}

\maketitle


\section{Introduction}

In geometric topology, a vital problem, more or less equal to the classification problem, is the classification of smooth manifolds up to T-stabilization. The study of homotopy decomposition on the connected sum of a manifold with projective spaces is a typical way to stabilize the manifold, and studied in \cite{HomotopyofManifoldsStablizedbyProjectiveSpaces,Kasprowski2018FourmanifoldsUT,SurgeryAndDuality,kreck2015thoughts}. Particularly, the study of connected sum of projective spaces is essential. 
In this paper, we compute the concordance structure set of $k$-fold connected sum of complex projective spaces $\Cp{2n}$ with $l$-fold connected sum of quaternionic projective spaces $\Hp{n}$, where $k,l\in\N$ and $2\leq n\leq 4$. 
\begin{defn}\label{def: concr} Let $M$ be a closed smooth manifold. Let $(N, f )$ be a pair consisting of a smooth manifold $N$ together with a homeomorphism $f: N \to M $. Two such pairs $(N_1 , f_1 )$ and $(N_2 , f_2 )$ are concordant provided there exists a diffeomorphism $g : N_1 \to N_2$ and a homeomorphism
$F : N_1 \times [0, 1] \to M \times [0, 1]$ such that $F|_{N_1 \times0} = f_1$ and $F |_{N_1 \times 1} = f_2 \circ g$. 
\end{defn}
The concordance class of $(N, f )$ is denoted by $[N, f ]$, and the set of all such concordance classes is denoted by $\cc{M}$.
Note that, from \cite[Part I]{HirschMazur}, concordance implies isotopy on smooth manifolds of dimensions $\geq 5$.

 In the smooth category of manifolds, the ultimate aim for a given manifold $M$ is to determine the set of all equivalence classes of smooth structures up to diffeomorphism, denoted by \textbf{$ {\mathcal{S}}(M) $} \cite{Cairns-Hirsch-Mazur}.
Acknowledging the inherent difficulty in directly addressing this problem, one first studies the concordance relation.
Observe that, there is a natural forgetful surjective map, 
		$ {\mathcal C} (M) \twoheadrightarrow {\mathcal{S}} (M)$.
Moreover, the group of self-homeomorphisms of $M$ acts on ${\mathcal C} (M)$, such that the orbit space of this action yields a bijection with the set ${\mathcal{S}}(M)$.
Another crucial reason for studying $\cc{M}$ is its correspondence in homotopy theory, as demonstrated by Kirby-Siebenmann that; if $ M $ admits a smooth structure, then there exists a set bijection 
$$\cc{M} \cong [M,\mathit{Top}/O],$$ 
 where $[M, \mathit{Top}/O]$ is in fact an abelian group \cite{KirbySiebenmannFoundationalEssays}. 

In particular, this paper concludes the following:  
\begin{thm}
     The concordance structure sets $\cc{\csum_k\Cp{n}}$ for $4\leq n\leq 8$, $\cc{\csum_k\Cp{2n}\csum_l\Hp{n}}$ for $2\leq n\leq 4$, and $\cc{\csum_k\Cp{8}\csum_l\Hp{4}\csum_m\Op{2}}$, where $k,l,m\in \N$, are determined. (see \Cref{sec 3.1}
\end{thm}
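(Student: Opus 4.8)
The plan is to convert the problem into stable homotopy theory through the Kirby--Siebenmann bijection $\cc{M} \cong \ftop{M}$ recorded in the introduction, and then to compute the resulting abelian groups cell by cell. Because $\mathit{Top}/O$ is an infinite loop space, the functor $\ftop{-}$ is representable and converts cofibre sequences into long exact sequences of abelian groups and wedges into direct sums; hence the computation is controlled entirely by the cell structure of $M$ together with the homotopy groups $\pii{i}(\mathit{Top}/O)$, which vanish for $i \leq 2$, equal $\Z/2$ for $i = 3$, and agree with the groups $\Th{i}$ of homotopy $i$-spheres for $i \geq 5$. All the manifolds in the statement are simply connected closed manifolds of real dimension $N$ with $8 \leq N \leq 16$, each carrying a CW structure with a single top cell, so the first reduction is the cofibre sequence
\begin{equation*}
\Sp{N-1} \overset{\phi}{\lr} M^{(N-1)} \lr M \lr \Sp{N} \overset{\Sigma\phi}{\lr} \Sigma M^{(N-1)},
\end{equation*}
where $\phi$ attaches the top cell. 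Applying $\ftop{-}$ produces the short exact sequence
\begin{equation*}
0 \lr \ckr{(\Sigma\phi)^*} \lr \ftop{M} \lr \kr{\phi^*} \lr 0,
\end{equation*}
in which $\ftop{\Sp{N}} = \pii{N}(\mathit{Top}/O) \cong \Th{N}$ supplies the top-cell contribution.

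First I would compute $\ftop{M_i}$ for each single factor $\Cp{2n}$, $\Hp{n}$, and $\Op{2}$. Their cells lie only in dimensions $0,2,4,\dots$ (respectively $0,4,8,\dots$ and $0,8,16$), so iterating the cofibre-sequence argument along the skeletal filtration assembles $\ftop{M_i}$ from the groups $\pii{d}(\mathit{Top}/O)$ indexed by the cell dimensions $d$. The connecting maps are the operations induced on $\ftop{-}$ by the successive attaching maps --- the complex, quaternionic, and Cayley Hopf maps and their iterates --- and I would identify them using the known behaviour of these maps together with the vanishing of many intermediate homotopy groups of $\mathit{Top}/O$. This determines each $\ftop{M_i}$ up to the extension problems built into the short exact sequences.

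The structural heart of the argument is the connected-sum formula. For $M = \csum_i M_i$ the lower skeleton splits as a wedge $M^{(N-1)} \simeq \bigvee_i M_i^{(N-1)}$, and the attaching map of the unique top cell is the sum $\phi = \sum_i \phi_i$ of the individual attaching maps under the wedge inclusions. Consequently $\phi^* = \sum_i \phi_i^*$ and $\im{(\Sigma\phi)^*} = \sum_i \im{(\Sigma\phi_i)^*}$, so that
\begin{equation*}
\ckr{(\Sigma\phi)^*} = \pii{N}(\mathit{Top}/O)\Big/\sum_i \im{(\Sigma\phi_i)^*}, \qquad \kr{\phi^*} = \kr{\sum_i \phi_i^*}.
\end{equation*}
In the degrees that occur here $\pii{N-1}(\mathit{Top}/O)$ is frequently trivial, forcing $\phi^* = 0$ and hence $\kr{\phi^*} = \bigoplus_i \ftop{M_i^{(N-1)}}$; the only genuine interaction among the summands is then the single shared copy of $\pii{N}(\mathit{Top}/O) \cong \Th{N}$. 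This observation is precisely what yields the additivity of the concordance inertia group: identifying this group with $\im{(\Sigma\phi)^*} \subseteq \Th{N}$, the relation $\im{(\Sigma\phi)^*} = \sum_i \im{(\Sigma\phi_i)^*}$ gives exactly the asserted equality with the sum of the individual inertia groups.

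The main obstacle will be twofold. First, I must resolve the extension $0 \ra \ckr{(\Sigma\phi)^*} \ra \ftop{M} \ra \kr{\phi^*} \ra 0$ in every case; since the groups $\Th{i}$ in this range mix $2$-torsion with cyclic factors of diverse orders, deciding whether a given extension splits is delicate and is where the bulk of the work resides. Second, I must compute the operations $\phi_i^*$ and $(\Sigma\phi_i)^*$ induced by the higher Hopf attaching maps, which are not formal and depend on how the $J$-homomorphism and $KO$-theoretic data interact with the homotopy of $\mathit{Top}/O$; the Cayley plane $\Op{2}$, whose $16$-cell is attached by the octonionic Hopf map $\Sp{15} \ra \Sp{8}$, is the most intricate case. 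Assembling the resulting extensions for all $k,l,m \in \N$ and all admissible $n$ then produces the structure sets tabulated in \Cref{sec 3.1}.
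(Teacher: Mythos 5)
Your framework is exactly the paper's: the Kirby--Siebenmann identification $\cc{M}\cong\ftop{M}$, the short exact sequence $0\to\ckr{(\Sigma\phi)^*}\to\ftop{M}\to\kr{\phi^*}\to 0$ coming from the top-cell cofibration, the observation that the attaching map of a connected sum factors through the pinch map so that $\phi^*=\sum_i\phi_i^*$ and $\im{(\Sigma\phi)^*}=\sum_i\im{(\Sigma\phi_i)^*}$, and the resulting additivity of $I_c$ via $I_c(M)=\kr{d^*}=\im{(\Sigma\phi)^*}$. All of that is correct and coincides with the paper's Theorems 2.2, 2.3 and 3.1.

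The gap is that the two items you defer as ``the main obstacle'' are precisely the content of the theorem, and your proposal gives no method for either. For the attaching-map operations, the paper does not compute $(\Sigma\phi_i)^*$ and $\phi_i^*$ from scratch along the skeletal filtration as you suggest; it imports them wholesale: $I_c(\Cp{n})=0$ for $n\leq 8$ (Kawakubo), $I_c(\Hp{n})=0$ for $n\leq 4$ (Aravinda--Farrell), $I_c(\Op{2})=0$ (Kasilingam), the groups $\cc{\Cp{n}}$ and $\cc{\Hp{n}}$ together with the images of $h_C^*$, $h_Q^*$, $h_O^*$ on them (Kasilingam), and the nullhomotopy of $d\circ h_C:\Sp{9}\to\Cp{4}\to\Sp{8}$ (Mosher). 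Attempting these by hand via ``$J$-homomorphism and $KO$-theoretic data'' is not a proof sketch but a restatement of the problem. For the extension problems, the paper has a concrete device you would need to supply: localize at the primes $2$ and $3$ (exactness of localization), observe that the $3$-local sequence splits for order reasons, and reduce the $2$-local splitting to showing the middle group has no element of order $4$ --- which is then established by exhibiting auxiliary cofibrations, e.g. $\vee_k\Cp{3}\hookrightarrow\csum_k\Cp{5}\to\vee_k\Sp{8}\vee\Sp{10}$ (using Mosher's nullhomotopy to identify the cofibre) and $\sqcup_{k-1}\Sp{13}\to\csum_k\Cp{7}\to\vee_k\Cp{7}$, whose induced maps surject $2$-locally onto $\cc{\csum_k\Cp{5}}$ and $\cc{\csum_k\Cp{7}}$ from groups that are elementary abelian $2$-groups. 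Without these inputs the structure sets are not ``determined,'' only expressed as unresolved extensions.
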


Recall that, for the case of standard $n$-sphere, the set $\mathcal{S}(\Sp{n})$ coincides with $\cc{\Sp{n}}$, which is also equivalent to the group $\Theta_n$ of $h$-cobordism classes of homotopy $n$-spheres, for all $n\geq 5$ \cite{KervaireMilnorGroupsOfHomotopySpheres}. 
In the classification problem, the group $\Theta_n$ plays an important role. Specifically, if we take a connected sum of a smooth manifold with an exotic sphere, the resulting manifold remains homeomorphic to the underlying topological manifold; however, it might change the diffeomorphism class, resulting in an exotic structure on the manifold. Therefore, determining such elements of $\Theta_n$ correlate with the classification problem. Certain important subgroups of $\Th{n}$ studied in this paper are defined below:
\begin{defn} Let $M$ be a closed  oriented smooth $n$-manifold.
\begin{enumerate}[(i)]
    \item  The inertia group of $M$ is the subgroup $I(M )\subseteq \Th{n} $ of homotopy spheres $\Sigma^n$ such that $M\csum\Sigma^n$ is diffeomorphic to $M$.

    \item The concordance inertia group $I_c(M)$ consists of $\Sigma^n\in I(M)$ such that there exists a diffeomorphism from $M\csum\Sigma^n\to M$ that is concordant to the canonical homeomorphism $h_{\smallsigma^n}:M\csum\Sigma^n\to M$.
\end{enumerate}
\end{defn}
 In  \cite{WilkensInertiaGroupSumIsNotSumOfInertia}, Wilkens demonstrated that the equality $I(M\csum N)=I(M)\oplus I(N)$ does not hold universally. In contrast to this finding, our paper demonstrates that this equality holds for the concordance inertia group. In \Cref{ic of connected sum is sum of I_c}, we see that, the concordance inertia group of individual manifolds holds the complete information needed to determine the concordance inertia group of their connected sum.
\begin{thm}
    The concordance inertia group of a connected sum of manifolds, is equal to the sum of the concordance inertia groups of individual manifolds.
\end{thm}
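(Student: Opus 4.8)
The plan is to reformulate the concordance inertia group homotopy-theoretically and then read the result off the Barratt--Puppe sequence of the pinch map. Throughout, all manifolds are closed, oriented and smooth of dimension $n\ge 5$, so that the Kirby--Siebenmann bijection $\cc{M}\cong\ftop{M}$ makes $\cc{M}$ an abelian group and identifies $\Th{n}\cong\cc{\Sp{n}}\cong\ftop{\Sp{n}}$. First I would package the connected-sum operation as a homomorphism $\mu_M\colon\Th{n}\to\cc{M}$, $\mu_M(\Sigma)=[M\csum\Sigma,h_{\smallsigma^n}]$, and observe that, straight from the definition, $I_c(M)=\ker\mu_M$: an element $\Sigma$ lies in $I_c(M)$ exactly when $(M\csum\Sigma,h_{\smallsigma^n})$ is concordant to $(M,\id)$, i.e.\ represents $0$ in $\cc{M}$. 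The key identification is that, under $\cc{M}\cong\ftop{M}$, the map $\mu_M$ equals $c_M^{*}$, where $c_M\colon M\to\Sp{n}$ is the degree-one collapse onto the top cell; this holds because the smoothing of $M\csum\Sigma$ relative to $M$ is supported on the top cell and is the pullback along $c_M$ of the class of $\Sigma$ in $\ftop{\Sp{n}}$. That $\mu_M$ is a homomorphism then follows, since connected sum along disjoint discs adds the two top-cell classes.

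Next I would introduce the pinch map $\nu\colon M\csum N\to M\vee N$ collapsing the neck sphere, fitting into the cofibre sequence $\Sp{n-1}\xrightarrow{i}M\csum N\xrightarrow{\nu}M\vee N\xrightarrow{\partial}\Sp{n}$. Applying the functor $\ftop{-}$, which is group-valued and cofibre-exact because $\mathit{Top}/O$ is an infinite loop space, yields the exact sequence $\Th{n}\xrightarrow{\partial^{*}}\cc{M}\oplus\cc{N}\xrightarrow{\nu^{*}}\cc{M\csum N}\xrightarrow{i^{*}}\Th{n-1}$, using $\ftop{M\vee N}\cong\cc{M}\oplus\cc{N}$. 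A reduced-homology computation shows $\partial$ has degree $+1$ on one wedge summand and $-1$ on the other, so by Hopf's theorem $\partial\simeq c_M\vee(-c_N)$ and hence $\partial^{*}(\sigma)=(\mu_M(\sigma),-\mu_N(\sigma))$; consequently $\ker\nu^{*}=\im{\partial^{*}}=\{(\mu_M(\tau),-\mu_N(\tau)):\tau\in\Th{n}\}$. I would also record the identity $\mu_{M\csum N}(\sigma)=\nu^{*}(\mu_M(\sigma),0)$: this uses $c_{M\csum N}\simeq c_M\circ q_M$ (again Hopf's theorem, both sides being degree-one maps to $\Sp{n}$) together with $q_M=\mathrm{pr}_M\circ\nu$, where $q_M\colon M\csum N\to M$ collapses the $N$-summand.

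The result now falls out of a diagram chase. One has $\sigma\in I_c(M\csum N)\iff\mu_{M\csum N}(\sigma)=0\iff(\mu_M(\sigma),0)\in\ker\nu^{*}=\im{\partial^{*}}$, i.e.\ there exists $\tau\in\Th{n}$ with $\mu_M(\sigma)=\mu_M(\tau)$ and $\mu_N(\tau)=0$. The second condition gives $\tau\in I_c(N)$ and the first gives $\sigma-\tau\in I_c(M)$, so $\sigma=(\sigma-\tau)+\tau\in I_c(M)+I_c(N)$. The reverse inclusion $I_c(M)+I_c(N)\subseteq I_c(M\csum N)$ is the easy direction: concordance is preserved under connected sum with a fixed manifold, so $I_c(M),I_c(N)\subseteq I_c(M\csum N)$. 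An induction on the number of summands then upgrades the two-summand statement to an arbitrary connected sum.

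I expect the main obstacle to be the two homotopy-theoretic inputs feeding the exact sequence, namely proving $\mu_M=c_M^{*}$ (the top-cell localization of the smoothing obstruction, which rests on the Kirby--Siebenmann smoothing theory) and pinning down the connecting map as $\partial^{*}=(\mu_M,-\mu_N)$, whereas the concluding chase is purely formal. A minor point to verify carefully is that $\ftop{-}$ genuinely produces an exact sequence of abelian groups from the Puppe sequence and that the wedge splits as $\ftop{M\vee N}\cong\cc{M}\oplus\cc{N}$; both follow from the infinite-loop structure on $\mathit{Top}/O$.
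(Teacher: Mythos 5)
Your argument is correct, but it takes a genuinely different route from the paper's. The paper works with the top-cell decomposition $\csum_{i}M_i\simeq(\vee_i M_i^{(n-1)})\cup_{\xi}\D{n}$, where $\xi=(\vee_i h_i)\circ p$ for the pinch map $p$: applying $[-,\mathit{Top}/O]$ to the associated cofibre sequence identifies $I_c(\csum_i M_i)=\kr{d^*}=\im{(\Sigma\xi)^*}$ and $I_c(M_i)=\im{(\Sigma h_i)^*}$, and the sum formula then drops out in one line from the relation $(\Sigma\xi)^*=\sum_i(\Sigma h_i)^*$, i.e.\ from the fact that the suspended pinch map induces addition. You instead use the cofibre sequence of the neck sphere, $\Sp{n-1}\to M\csum N\to M\vee N\to\Sp{n}$, identify the connecting map via Hopf's theorem, run a diagram chase, and induct on the number of summands. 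Both arguments rest on the same foundational input, namely $I_c(M)=\kr{c_M^*}$ under the Kirby--Siebenmann identification. The paper's route buys brevity and a statement for all $k$ summands at once, at the cost of invoking a minimal CW structure with a single top cell; your route avoids the skeleta entirely and works directly with the manifolds, at the cost of the extra bookkeeping for the connecting map $\partial$ (whose signs, as you implicitly note, are immaterial, since only the conditions $\mu_N(\tau)=0$ and $\mu_M(\sigma)=\mu_M(\tau)$ enter the chase). One small streamlining available to you: the reverse inclusion $I_c(M)+I_c(N)\subseteq I_c(M\csum N)$ already follows from your identity $\mu_{M\csum N}(\sigma)=\nu^{*}(\mu_M(\sigma),0)$ and its symmetric counterpart, so there is no need to appeal separately to preservation of concordance under connected sum.
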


 In the category of manifolds, the classification problem involves studying different sets (see \cite{KirbySiebenmannFoundationalEssays,kui,Cairns-Hirsch-Mazur,mun,Sullivan1996}), and interestingly, many of these sets like stable cohomotopy groups, real/complex $K$-groups, (tangential) normal invariant sets, etc., exhibit a bijective correspondence with  $[M, Y]$, for a manifold $M$ and an $H$-space $Y$. It is worth noting that, the set $[M,Y]$ becomes more accessible from the viewpoint of algebraic topology as it fits into an exact sequence of abelian groups.
 In this paper, spaces of our interest are  the connected sum of closed oriented smooth $n$-manifolds $M_i$, denoted by $\#_{i\in I}M_i$, for $i\in I=\{1,2,\dots,k\}$. \Cref{thm:gen short exact sequence} presents a short exact sequence for computing the group $[\#_{i\in I}M_i,Y]$ in terms of values associated with $[M_i,Y]$. Additionally, we establish conditions on the collapse map $\psi:M_1\#M_2\to M_1$ to clarify the relation between the groups $[M_1,Y]$ and $[M_1\#M_2,Y]$. 
 In special case, we observe that,
 
 \begin{thm}
     The induced map $ \psi^*:\cc{M_1}\lr \cc{M_1\#M_2}$ is injective if and only if $ I_c(M_1\#M_2)=I_c(M_1) $.
 \end{thm}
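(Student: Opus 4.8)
The plan is to translate the statement into the homotopy category via the Kirby–Siebenmann identification $\cc{M}\cong[M,\mathit{Top}/O]$ \cite{KirbySiebenmannFoundationalEssays}, so that $\psi^*$ becomes the homomorphism of abelian groups $[M_1,\mathit{Top}/O]\lr[M_1\csum M_2,\mathit{Top}/O]$ induced by the collapse $\psi$. Writing $Y=\mathit{Top}/O$ and letting $c_M\colon M\ra\Sp{n}$ denote the degree-one collapse of the complement of an embedded disk, I would first record that the homomorphism $\omega_M\coloneqq c_M^*\colon\Th{n}=[\Sp{n},Y]\ra[M,Y]=\cc{M}$ sends a homotopy sphere $\Sigma$ to the class $[M\csum\Sigma,h_\Sigma]$ of the connected sum with its canonical homeomorphism; by the very definition of the concordance inertia group this yields $\kr{\omega_M}=I_c(M)$. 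Since collapsing the $M_2$-summand and then the complement of a disk in $M_1$ agrees up to homotopy with collapsing the complement of a disk in $M_1\csum M_2$, we have $c_{M_1}\circ\psi\simeq c_{M_1\csum M_2}$, and hence the commuting relation $\psi^*\circ\omega_{M_1}=\omega_{M_1\csum M_2}$.

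With this in hand the forward implication is immediate: if $\psi^*$ is injective then $\kr{\omega_{M_1\csum M_2}}=\kr{\psi^*\circ\omega_{M_1}}=\kr{\omega_{M_1}}$, which is exactly $I_c(M_1\csum M_2)=I_c(M_1)$. For the converse I must show that the hypothesis $I_c(M_1\csum M_2)=I_c(M_1)$ forces $\kr{\psi^*}=0$, and the key is to compute this kernel explicitly. I would factor the collapse as $\psi=r\circ p$, where $p\colon M_1\csum M_2\ra M_1\vee M_2$ is the pinch map and $r\colon M_1\vee M_2\ra M_1$ retracts away $M_2$. Since $r$ admits the inclusion $M_1\hookrightarrow M_1\vee M_2$ as a section, the induced $r^*$ is the split inclusion $[M_1,Y]\hookrightarrow[M_1,Y]\oplus[M_2,Y]=[M_1\vee M_2,Y]$ onto the first summand, so that $\kr{\psi^*}=\{x\in[M_1,Y]:(x,0)\in\kr{p^*}\}$.

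It then remains to identify $\kr{p^*}$, which is where the main work lies. Applying $[-,Y]$ to the cofiber sequence $M_1\csum M_2\xrightarrow{p}M_1\vee M_2\xrightarrow{q}\Sp{n}$ that underlies \Cref{thm:gen short exact sequence} gives the exactness $\kr{p^*}=\im{q^*}$; and because $q$ restricts to the collapse maps $c_{M_1}$ and $\pm c_{M_2}$ on the two wedge summands, $q^*$ is the homomorphism $\alpha\mapsto(\omega_{M_1}(\alpha),\pm\omega_{M_2}(\alpha))$. Consequently $(x,0)\in\im{q^*}$ precisely when $x=\omega_{M_1}(\alpha)$ for some $\alpha$ with $\omega_{M_2}(\alpha)=0$, that is,
\[
\kr{\psi^*}=\omega_{M_1}\bigl(\kr{\omega_{M_2}}\bigr)=\omega_{M_1}\bigl(I_c(M_2)\bigr).
\]
Finally, invoking \Cref{ic of connected sum is sum of I_c} in the form $I_c(M_1\csum M_2)=I_c(M_1)+I_c(M_2)$, the hypothesis $I_c(M_1\csum M_2)=I_c(M_1)$ is equivalent to $I_c(M_2)\subseteq I_c(M_1)=\kr{\omega_{M_1}}$, which makes $\omega_{M_1}(I_c(M_2))=0$ and hence $\kr{\psi^*}=0$, completing the converse. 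I expect the delicate points to be the clean identification $\kr{\omega_M}=I_c(M)$ (requiring the compatibility of the $\Th{n}$-action with the group structure on $\cc{M}$) and the orientation bookkeeping that pins down $q^*$ as $(\omega_{M_1},\pm\omega_{M_2})$; the sign is harmless here, since only the vanishing of the second coordinate enters the kernel computation.
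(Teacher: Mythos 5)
Your argument is correct, but it reaches the conclusion by a genuinely different route from the paper. The paper first proves the general criterion of \Cref{M to M connected sum N}$(i)$ for an arbitrary homotopy-commutative $H$-group $Y$ --- injectivity of $\psi^*$ is equivalent to $\im{(\Sigma\xi)^*}=\im{(\Sigma h_1)^*}$ --- by comparing the two short exact sequences of \Cref{thm:gen short exact sequence} attached to the minimal cell decompositions of $M_1$ and $M_1\csum M_2$ and running the five lemma on the resulting commutative ladder; the corollary then drops out by identifying $\im{(\Sigma\xi)^*}$ and $\im{(\Sigma h_1)^*}$ with $I_c(M_1\csum M_2)$ and $I_c(M_1)$. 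You instead work with the Puppe sequence of the separating sphere $\Sp{n-1}\hookrightarrow M_1\csum M_2$, whose continuation $M_1\csum M_2\to M_1\vee M_2\to\Sp{n}$ is \emph{not} the cofiber sequence underlying \Cref{thm:gen short exact sequence} (that one comes from the attaching map $\xi$ of the top cell), so your attribution there is slightly off even though the sequence itself is standard and correct. Your computation yields the sharper statement $\kr{\psi^*}=d^*\bigl(I_c(M_2)\bigr)\subseteq\cc{M_1}$, which the paper's five-lemma argument does not make explicit, and it avoids cell decompositions entirely; the trade-off is that your converse direction invokes the sum formula $I_c(M_1\csum M_2)=I_c(M_1)+I_c(M_2)$ of \Cref{ic of connected sum is sum of I_c}, which appears later in the paper but is proved independently of this corollary, so there is no circularity --- though you could also get the needed containment $I_c(M_2)\subseteq I_c(M_1\csum M_2)$ directly from the factorization of the degree-one map through the collapse onto $M_2$, keeping the proof self-contained. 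The two points you flag as delicate (the identification $\kr{d^*}=I_c(M)$ and the signs on the connecting map) are exactly the ones the paper also treats as known or immaterial, so nothing further is required there.
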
 
 

  \noindent\textbf{Convention:} We work in the category of manifolds, which are  connected closed oriented smooth of dimension $\geq 5$. Furthermore, all morphisms in this category are considered to be orientation-preserving smooth.
 The notations $I$, $d$ and $p$ are reserved for the indexing set $I=\{1,2,\dots,k\}$, the degree one map $d:M^n\to \Sp{n}$ and  the pinch map $p:\Sp{n}\to \vee_{i\in I}\Sp{n}_i$, respectively, for $n\in \N$.\\

\noindent\textbf{Organization of the paper:}
In Section \ref{sec 4}, we present a short exact sequence containing the group $[\csum_{i\in I}M_i^n,Y]$ for homotopy-commutative $H$-groups $Y$. We conclude the section by exhibiting a relation between the groups $[M_1,Y]$ and $[M_1\csum M_2,Y]$. In Section \ref{sec 5}, it is demonstrated that the concordance inertia group of a connected sum is the sum of the individual concordance inertia groups. As an application of previous results, explicit values of the $\cc{\csum_k\mathbb{P}^n}$, where $\mathbb{P}^n=\Cp{n}, \Hp{n}$ or $\Op{n}$ with real-dimension $\leq16$.

\section{Maps from the connected sum of manifolds to homotopy commutative \texorpdfstring{$H$}{H}-groups}\label{sec 4}

Let $ M_1 $, $ M_2 $, $ \dots $, $ M_k $ be closed oriented smooth $ n $-dimensional manifolds with minimal cellular decomposition $ M_i\simeq M_i^{(n-1)}\bigcup_{h_i}\D{n} $, where $ h_i:\Sp{n-1}_i\to M_i^{(n-1)} $ is the attaching map for the top cell with $ i\in I=\{1,2,\dots,k\} $. Let $ \csum_{i\in I}M_i $ denote the connected sum of $ M_1 , M_2 , \dots , M_k $. Then we have 
$$\underset{i\in I}{\csum}M_i\simeq\left(\underset{i\in I}{\vee}M_i^{(n-1)}\right)\underset{\xi}{\bigcup}~\D{n},$$
where $ \xi=(\vee_{i\in I}h_i)\circ p$ is the attaching map for the top cell with $ p:\Sp{n-1}\to \vee_{i\in I}\Sp{n-1}_i $ as the pinch map. Thus we have the cofiber sequence $\Sp{n-1}\overset{\xi}{\to} \vee_{i\in I}M_i^{(n-1)} \overset{\iota}{\hookrightarrow}\csum_{i\in I}M_i  $. 
From this sequence, we obtain the following long exact sequence by applying the contra-variant functor $ [\_,Y] $, for a homotopy-commutative $H$-group $ Y $,  

\begin{equation}\label{main seq}
\begin{tikzcd}[column sep=2.2em]
	{\cdots\underset{i\in I}{\opl{}}[\Sigma^{s+1}M_i^{(n-1)},Y]} & {[\Sp{n+s},Y]} & {[\Sigma^{s}(\underset{i\in I}{\csum}M_i),Y]} & {\dis\underset{i\in I}{\opl{}}[\Sigma^s M_i^{(n-1)},Y]\dots} \\
	{[\Sp{n},Y]} & {[\underset{i\in I}{\csum}M_i,Y]} & {\dis\underset{i\in I}{\opl{}}[M_i^{(n-1)},Y]} & {[\Sp{n-1},Y],}
	\arrow["{(\Sigma^{s+1}\xi)^*}", from=1-1, to=1-2]
	\arrow["{(\Sigma^{s}d)^*}", from=1-2, to=1-3]
	\arrow["{(\Sigma\xi)^*}"{description},overlay, in=168, out=-10, from=1-4, to=2-1]
	\arrow["{d^*}", from=2-1, to=2-2]
	\arrow["{\iota^*}", from=2-2, to=2-3]
	\arrow["{\xi^*}", from=2-3, to=2-4]
	\arrow["{(\Sigma^{s}\iota)^*}", from=1-3, to=1-4]
\end{tikzcd}
\end{equation}
where $ d^* $ is induced by the degree one map $ d:\csum_{i\in I}M_i\to \Sp{n} $ and $ \Sigma^s $ denotes the positive $ s $-fold suspension. \\

We utilize the property of the pinch map in sequence \eqref{main seq} to derive the following theorem:
\begin{thm}{\label{thm:gen short exact sequence}}
Let $ Y $ be a homotopy-commutative $ H $-group. Then there exists a short exact sequence
\[\begin{tikzcd}
	0 & {\ckr{(\Sigma^{s+1}\xi)^*}} & {[\Sigma^{s}(\csum_{i\in I}M_i),Y]} & {\kr{(\Sigma^{s}\xi)^*}} & 0,
	\arrow[from=1-1, to=1-2]
	\arrow["{\wi{(\Sigma^sd)^*}}", from=1-2, to=1-3]
	\arrow["{(\Sigma^{s}\iota)^*}", from=1-3, to=1-4]
	\arrow[from=1-4, to=1-5]
\end{tikzcd}\]
where the map $ \wi{(\Sigma^sd)^*} $ is induced by the degree one map $\Sigma^sd :\Sigma^{s}(\csum_{i\in I}M_i)\to \Sp{n+s}  $, 
$$\ckr{(\Sigma^{s+1}\xi)^*}= \faktor{\pii{n+s}(Y)}{\sum_{i\in I}\im{(\Sigma^{s+1} h_i)^*}}$$
and $$\kr{(\Sigma^s\xi)^*}=\kr{{\underset{i\in I}\sum}(\Sigma^sh_i)^*}~.$$
\end{thm}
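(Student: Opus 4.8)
The plan is to read the claimed short exact sequence off from the Puppe long exact sequence \eqref{main seq} by a purely formal argument, and then to compute its two end terms explicitly using the defining property of the pinch map $p$. Throughout I use that $[\Sp{m},Y]\cong\pii{m}(Y)$ and, since $Y$ is an $H$-group, that $[\vee_{i\in I}X_i,Y]\cong\opl{i\in I}[X_i,Y]$, so the wedge terms appearing in \eqref{main seq} split as direct sums of groups.

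First I would carry out the homological step, which is valid for any exact sequence of abelian groups $A\xrightarrow{f}B\xrightarrow{g}C\xrightarrow{h}D\xrightarrow{k}E$. Exactness at $B$ gives $\kr{g}=\im{f}$, so $g$ descends to an injection $\ckr{f}=B/\im{f}\hookrightarrow C$ whose image is $\im{g}=\kr{h}$; exactness at $D$ gives $\im{h}=\kr{k}$. Applying this to the five consecutive terms of \eqref{main seq} centred on $[\Sigma^{s}(\csum_{i\in I}M_i),Y]$, namely with $f=(\Sigma^{s+1}\xi)^*$, $g=(\Sigma^{s}d)^*$, $h=(\Sigma^{s}\iota)^*$ and $k=(\Sigma^{s}\xi)^*$, yields exactly the asserted sequence: $\wi{(\Sigma^sd)^*}$ is the induced injection of $\ckr{(\Sigma^{s+1}\xi)^*}$, and $(\Sigma^{s}\iota)^*$ maps $[\Sigma^{s}(\csum_{i\in I}M_i),Y]$ onto $\kr{(\Sigma^{s}\xi)^*}$ with kernel $\im{g}$.

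The substantive step is to identify the maps $(\Sigma^{s+1}\xi)^*$ and $(\Sigma^{s}\xi)^*$ explicitly. Since $\xi=(\vee_{i\in I}h_i)\circ p$ and suspension is functorial, $\Sigma^r\xi=(\vee_{i\in I}\Sigma^r h_i)\circ\Sigma^r p$, and $\Sigma^r p$ is again a pinch map on $\Sp{n+r-1}$. The key fact I would invoke is that the pinch map represents the codiagonal: for a homotopy-commutative $H$-group $Y$ the induced map $(\Sigma^r p)^*:\opl{i\in I}\pii{n+r-1}(Y)\to\pii{n+r-1}(Y)$ is the summation homomorphism $(\alpha_i)_{i}\mapsto\sum_{i\in I}\alpha_i$. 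Precomposing with $(\vee_{i\in I}\Sigma^r h_i)^*=\opl{i\in I}(\Sigma^r h_i)^*$ then gives $(\Sigma^r\xi)^*\big((\beta_i)_{i}\big)=\sum_{i\in I}(\Sigma^r h_i)^*(\beta_i)$. Taking $r=s+1$ yields $\im{(\Sigma^{s+1}\xi)^*}=\sum_{i\in I}\im{(\Sigma^{s+1}h_i)^*}$ inside $\pii{n+s}(Y)$, hence the stated cokernel $\faktor{\pii{n+s}(Y)}{\sum_{i\in I}\im{(\Sigma^{s+1}h_i)^*}}$; taking $r=s$ identifies $(\Sigma^s\xi)^*$ with $\underset{i\in I}{\sum}(\Sigma^s h_i)^*$, giving $\kr{(\Sigma^{s}\xi)^*}=\kr{\underset{i\in I}{\sum}(\Sigma^sh_i)^*}$.

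The main obstacle is the pinch-map identity: one must verify that, after suspension, $p$ still induces summation on $[-,Y]$. This is precisely where both hypotheses on $Y$ are genuinely used — the $H$-group structure supplies the group addition and the wedge splitting $[\vee_{i\in I}X_i,Y]\cong\opl{i\in I}[X_i,Y]$, while homotopy-commutativity guarantees that the order-independent sum is well defined and that the splitting is effected by this addition. Once this identity is in hand, everything else is formal manipulation of \eqref{main seq}.
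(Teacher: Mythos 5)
Your proposal is correct and follows essentially the same route as the paper: extract the short exact sequence formally from the Puppe long exact sequence \eqref{main seq}, then identify $(\Sigma^{r}\xi)^{*}$ with $\sum_{i\in I}(\Sigma^{r}h_i)^{*}$ via the codiagonal (summation) property of the pinch map to compute the two end terms. No gaps.
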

 \begin{proof}
Using the long exact sequence \eqref{main seq}, we acquire the following short exact sequence
\[\begin{tikzcd}
	0 & {\faktor{\pii{n+s}(Y)}{\kr{(\Sigma^sd)^*}}} & {[\Sigma^s(\csum_{i\in I}M_i),Y]} & {\im{(\Sigma^s\iota)^*}} & 0,
	\arrow[from=1-1, to=1-2]
	\arrow["{\wi{(\Sigma^sd)^*}}", from=1-2, to=1-3]
	\arrow["{(\Sigma^s\iota)^*}", from=1-3, to=1-4]
	\arrow[from=1-4, to=1-5]
\end{tikzcd}\]
where $ \wi{(\Sigma^sd)^*} $ is the induced map by the degree one map $ (\Sigma^sd) :\Sigma^{s}(\csum_{i\in I}M_i)\to \Sp{n+s}$. Using exactness of the sequence \eqref{main seq}, we have $$ \im{(\Sigma^s\iota)^*}=\kr{(\Sigma^s\xi)^*} $$ and $$ \kr{(\Sigma^sd)^*}=\im{(\Sigma^{s+1} \xi)^*}. $$ This offers the existence of the required short exact sequence.

 Now we express the $\im{(\Sigma^{s+1} \xi)^*}$ in terms of
$\im{(\Sigma^{s+1}h_i)^*}$, and the $\kr{(\Sigma^{s} \xi)^*}$ in terms of $(\Sigma^{s} h_i)^*$. Note that $ \xi=(\vee_{i\in I}h_i)\circ p $, where $ p $ is the pinch map. For any $ m\in \N $, the pinch map $ p:\Sp{m}\to\bigvee_{i\in I}\Sp{m}_i  $ induces the map $p^*:\opl{i\in I}[\Sp{m}_i,Y]\to[\Sp{m},Y]  $  with the property that it 
maps any element $ (x_1,x_2,\dots,x_k) $ to $ \sum_{i\in I}x_i $. This implies that for any positive $ s $-fold suspension of $ \xi $, we have $ (\Sigma^s\xi)^*=\sum_{i\in I}(\Sigma^sh_i)^*$ which leads to the following:
\begin{equation}\label{eq:im of xi is equal to the sum of hi}
\im{(\Sigma^{s+1}\xi)^*}=\sum_{i\in I}\im{(\Sigma ^{s+1}h_i)^*} 
\end{equation} 
and
\begin{equation}\label{eq:ker of xi is ker of sum of hi}
 \kr{(\Sigma^{s}\xi)^*}=\kr{\sum_{i\in I}(\Sigma^{s}h_i)^*} ~.
\end{equation}
This completes the proof.
\end{proof}
If we consider a $k$-fold connected sum of manifold $M$, $\csum_kM$, in the \Cref{thm:gen short exact sequence}, then we have the following version:
\begin{thm}\label{main thm for connected sum of same copy}
Let $ M $ be a closed oriented smooth $ n $-manifold. Then the
$$\ckr{(\Sigma^{s+1}\xi)^*}=  \faktor{\pii{n+s}(Y)}{\im{(\Sigma^{s+1} h)^*}}$$ 
and $$\kr{(\Sigma^{s}\xi)^*}\cong  \big(\opl{k-1}[\Sigma^{s}M^{(n-1)},Y]\big)\oplus\kr{(\Sigma^{s}h)^*},$$\\
\noindent where $ \xi=(\vee_kh)\circ p $ is the attaching map for $ \csum_kM $ with $ h:\Sp{n-1}\to M^{(n-1)} $ as the attaching map for $ M $ and $ p $ is the pinch map.
\end{thm}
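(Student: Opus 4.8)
The plan is to specialize \Cref{thm:gen short exact sequence} to the situation $M_i = M$, $M_i^{(n-1)} = M^{(n-1)}$ and $h_i = h$ for every $i \in I$, and then to simplify the two outer terms of the resulting short exact sequence. Both simplifications are purely algebraic consequences of the identities established in the proof of the parent theorem.

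For the cokernel, equation \eqref{eq:im of xi is equal to the sum of hi} expresses $\im{(\Sigma^{s+1}\xi)^*}$ as $\sum_{i\in I}\im{(\Sigma^{s+1}h)^*}$. Since each summand is now the \emph{same} subgroup $\im{(\Sigma^{s+1}h)^*}$ of $\pii{n+s}(Y)$, the sum collapses to the single subgroup $\im{(\Sigma^{s+1}h)^*}$, whence $\ckr{(\Sigma^{s+1}\xi)^*} = \faktor{\pii{n+s}(Y)}{\im{(\Sigma^{s+1}h)^*}}$, as claimed.

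The substantive step is the kernel. By equation \eqref{eq:ker of xi is ker of sum of hi} we have $\kr{(\Sigma^{s}\xi)^*} = \kr{\sum_{i\in I}(\Sigma^{s}h)^*}$, where the homomorphism $\sum_{i\in I}(\Sigma^s h)^* \colon \opl{i\in I}[\Sigma^s M^{(n-1)},Y] \lr \pii{n+s-1}(Y)$ sends $(x_1,\dots,x_k)$ to $\sum_{i\in I}(\Sigma^s h)^*(x_i)$. Writing $A = [\Sigma^s M^{(n-1)},Y]$ and $\phi = (\Sigma^s h)^*$, I would introduce the automorphism $\alpha$ of $\opl{k}A$ given by $(x_1,x_2,\dots,x_k)\mapsto (x_1+x_2+\cdots+x_k,\,x_2,\dots,x_k)$, whose inverse replaces the first coordinate $y_1$ by $y_1 - y_2 - \cdots - y_k$ and fixes the remaining ones. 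A direct computation then gives $\big(\sum_{i\in I}\phi\big)\circ\alpha^{-1} = \phi\circ\mathrm{pr}_1$, the first projection followed by $\phi$. Since $\alpha$ is an isomorphism it carries $\kr{\sum_{i\in I}\phi}$ isomorphically onto $\kr{\phi\circ\mathrm{pr}_1} = \kr{\phi}\oplus(\opl{k-1}A)$, which is precisely the asserted splitting.

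The one point that needs care is checking that $\alpha$ is a well-defined group automorphism and that it intertwines the summation map with a projection; this is the heart of the argument, and it works because $Y$ is a homotopy-commutative $H$-group, so that $[\Sigma^s(\vee_i M^{(n-1)}),Y]$ is an abelian group splitting as the direct sum $\opl{i\in I}[\Sigma^s M^{(n-1)},Y]$ on which the pinch map induces coordinatewise addition—exactly the structure already exploited in the proof of \Cref{thm:gen short exact sequence}. Everything else is a formal consequence of that theorem.
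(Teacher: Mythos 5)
Your proposal is correct and follows essentially the same route as the paper: both reduce to the identity $(\Sigma^s\xi)^*=\sum_{i\in I}(\Sigma^sh)^*$ from \Cref{thm:gen short exact sequence} and then split the kernel by the same shear change of coordinates on $\opl{k}[\Sigma^sM^{(n-1)},Y]$ (the paper writes the restricted isomorphism $(x_1,\dots,x_{k-1},x)\mapsto(x_1,\dots,x_{k-1},x-\sum_{i=1}^{k-1}x_i)$ directly, while you package it as a global automorphism intertwining $\sum_i\phi$ with $\phi\circ\mathrm{pr}_1$, which is only a cosmetic difference).
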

\begin{proof}
We have $ \xi=(\vee_kh)\circ p $, hence as a particular case of \eqref{eq:im of xi is equal to the sum of hi}, we obtain $$ \im{(\Sigma^{s+1}\xi)^*} = \im{(\Sigma^{s+1}h)^*} .$$ 

Moreover, we have
\begin{center}
$\kr{(\Sigma^{s}\xi)^*}\cong (\opl{k-1}[\Sigma^{s}M^{(n-1)},Y])\oplus\kr{(\Sigma^{s}h)^*}.$ 
\end{center} 
This holds since any element $ (x_1,x_2,\dots,x_k)$ of $\opl{k}[\Sigma^{s}M^{(n-1)},Y] $ lies in $ \kr{(\Sigma^{s}\xi)^*} $ if and only if $ \sum_{i\in I}(\Sigma^s h)^*(x_i)=(\Sigma^sh)^*(\sum_{i\in I}x_i) $ is $ 0 $. Therefore the sum $ \sum_{i\in I}x_i=x $ for some $ x\in\kr{(\Sigma^{s}\xi)^*} $ which implies $ x_k=x-(\sum_{i=1}^{k-1}x_i) $. Hence the map  
\begin{align*}
    (\opl{k-1}[\Sigma^{s}M^{(n-1)},Y])\oplus\kr{\Sigma^{s}h^*} &\longrightarrow \kr{(\Sigma^{s}\xi)^*} \\
    (x_1,x_2,\dots,x_{k-1},x) &\mapsto \left(x_1,x_2,\dots,x_{k-1},x -(\textstyle \sum_{i=1}^{k-1}x_i) \right)
\end{align*}
is indeed an isomorphism, and this completes the proof.
\end{proof}

We conclude this section by exhibiting a relation between the groups $[M_1,Y]$ and $[M_1\csum M_2,Y]$. In particular, we present conditions for the injectivity and surjectivity of the induced collapse map from $[M_1,Y]$ to $[M_1\csum M_2,Y]$.
\begin{thm}\label{M to M connected sum N}
Let $ M_1$ and $M_2 $ be closed oriented smooth $ n $-manifolds with $ M_i=M_i^{(n-1)}\bigcup_{h_i}\D{n} $, where $ h_i:\Sp{n-1}\to M_i^{(n-1)}  $ is the attaching map for $ i=1,2 $. Let $ \psi:M_1\csum M_2\to M_1 $ be the collapse map and $ Y $ be a homotopy-commutative $ H$-group. Then for the attaching map $ \xi:=(h_1\vee h_2) \circ p$ of $ M_1\csum M_2$, where $ p $ is the pinch map, the following holds:
\begin{enumerate}[(i)]
\item The map $ \psi^*:[M_1,Y]\to [M_1\csum M_2,Y] $ is injective if and only if $ \im{(\Sigma\xi)^*}=\im{(\Sigma h_1)^*} $.

\item The map $ \psi^*:[M_1,Y]\to [M_1\csum M_2,Y] $ is surjective if and only if $ \kr{\xi^*}\cong\kr{h_1^*} $.

\end{enumerate}
\end{thm}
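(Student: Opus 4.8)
The plan is to analyze the collapse map $\psi$ by comparing the cofiber sequences for $M_1 \csum M_2$ and for $M_1$, and then reading off injectivity and surjectivity of $\psi^*$ from the induced maps on the short exact sequences supplied by \Cref{thm:gen short exact sequence}. First I would record how $\psi$ interacts with the cellular structures. The collapse map $\psi : M_1 \csum M_2 \to M_1$ is compatible with the inclusions of the $(n-1)$-skeleta: it restricts to the projection $M_1^{(n-1)} \vee M_2^{(n-1)} \to M_1^{(n-1)}$ that kills $M_2^{(n-1)}$, and it is degree one on the top cell. Applying $[\_,Y]$, this yields a morphism between the long exact sequence \eqref{main seq} for $M_1 \csum M_2$ and the corresponding three-term cofiber sequence $\Sp{n-1} \xrightarrow{h_1} M_1^{(n-1)} \hookrightarrow M_1$ for $M_1$. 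Concretely, I expect a commuting ladder whose vertical maps include $\psi^*$ in the middle, the canonical inclusion $[M_1^{(n-1)},Y] \hookrightarrow [M_1^{(n-1)},Y] \oplus [M_2^{(n-1)},Y]$ on the skeleton term, and the identity on $[\Sp{n},Y] = \pi_n(Y)$ arising from the degree-one condition.

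With this ladder in place, the strategy is to apply the short exact sequence of \Cref{thm:gen short exact sequence} (with $s=0$, $k=2$) to $M_1 \csum M_2$ and compare it to the analogous sequence for $M_1$. For $M_1 \csum M_2$ the sequence reads
\[
0 \to \faktor{\pi_n(Y)}{\im{(\Sigma\xi)^*}} \to [M_1 \csum M_2, Y] \to \kr{\xi^*} \to 0,
\]
while for $M_1$ it reads
\[
0 \to \faktor{\pi_n(Y)}{\im{(\Sigma h_1)^*}} \to [M_1, Y] \to \kr{h_1^*} \to 0.
\]
Since $\im{(\Sigma h_1)^*} \subseteq \im{(\Sigma\xi)^*}$ by \eqref{eq:im of xi is equal to the sum of hi}, and since $\kr{h_1^*}$ sits inside $\kr{\xi^*}$ via the skeletal inclusion, the map $\psi^*$ fits into a $3 \times 3$ comparison diagram. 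The left vertical map is the natural surjection $\pi_n(Y)/\im{(\Sigma h_1)^*} \twoheadrightarrow \pi_n(Y)/\im{(\Sigma\xi)^*}$, which is an isomorphism precisely when $\im{(\Sigma\xi)^*} = \im{(\Sigma h_1)^*}$; the right vertical map is the inclusion $\kr{h_1^*} \hookrightarrow \kr{\xi^*}$, which is surjective (hence an isomorphism) precisely when $\kr{\xi^*} \cong \kr{h_1^*}$.

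For part (i), I would run a diagram chase: $\psi^*$ is injective iff an element mapping to zero in $[M_1 \csum M_2, Y]$ already came from zero. Because the right-hand map $\kr{h_1^*} \to \kr{\xi^*}$ is always injective (it is a restriction of the inclusion of a direct summand), the kernel of $\psi^*$ is controlled entirely by the left column; a short exact sequence argument (the snake lemma or a direct chase) then shows $\ker\psi^* \cong \ker\big(\pi_n(Y)/\im{(\Sigma h_1)^*} \to \pi_n(Y)/\im{(\Sigma\xi)^*}\big)$, which vanishes exactly when $\im{(\Sigma\xi)^*} = \im{(\Sigma h_1)^*}$. For part (ii), surjectivity of $\psi^*$ forces surjectivity on both the sub- and quotient-objects; since the left map is automatically surjective, surjectivity of $\psi^*$ is equivalent to surjectivity of the right map $\kr{h_1^*} \to \kr{\xi^*}$, i.e.\ to $\kr{\xi^*} \cong \kr{h_1^*}$.

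The main obstacle I anticipate is verifying the commutativity of the comparison ladder precisely — in particular, checking that $\psi$ genuinely induces the split inclusion on the skeletal summands and the identity on $\pi_n(Y)$, rather than a more complicated map twisted by the pinch map $p$. Establishing that $\psi^*$ restricted to the degree-one/cokernel part is the canonical quotient map, and that it respects the splitting $\kr{\xi^*} \cong [M_2^{(n-1)},Y] \oplus \kr{h_1^*}$ coming from \Cref{main thm for connected sum of same copy}, is the delicate point; once the naturality of the short exact sequence in \Cref{thm:gen short exact sequence} with respect to $\psi$ is pinned down, both equivalences follow by routine homological algebra.
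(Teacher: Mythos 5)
Your proposal is correct and follows essentially the same route as the paper: both compare the short exact sequences of \Cref{thm:gen short exact sequence} for $M_1$ and $M_1\csum M_2$ via the ladder induced by the collapse maps, observe that the induced map on cokernel terms is always surjective and the induced map on kernel terms (via the skeletal inclusion) is always injective, and then conclude by a five-lemma/snake-lemma chase. The only cosmetic difference is that the paper invokes the five lemma where you phrase the same step as a snake-lemma identification of $\ker\psi^*$ with the kernel of the left vertical map.
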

\begin{proof}
Consider the following homotopy commutative diagram
\[\begin{tikzcd}
	{\Sp{n-1}} & {M_1^{(n-1)}\vee M_2^{(n-1)}} & {M_1\csum M_2} & \cdots \\
	{\Sp{n-1}} & {M_1^{(n-1)}} & {M_1} & \cdots
	\arrow["{h_1}"', from=2-1, to=2-2]
	\arrow["{\iota_1}"', hook, from=2-2, to=2-3]
	\arrow["d"', from=2-3, to=2-4]
	\arrow["\xi", from=1-1, to=1-2]
	\arrow["g", from=1-1, to=2-1]
	\arrow["\phi", from=1-2, to=2-2]
	\arrow["\iota", hook, from=1-2, to=1-3]
	\arrow["\psi", from=1-3, to=2-3]
	\arrow["d", from=1-3, to=1-4]
\end{tikzcd}\]
 where $ \xi=(h_1\vee h_2) \circ p$ for the pinch map $ p $, horizontal sequences are cofiber sequences associated to maps $ \xi $ and $ h_1 $, and maps $ g,\, \phi$ and $\psi  $ are collapse maps. Now, applying the contra-variant functor $ [\_,Y] $ onto this diagram, we acquire the following commutative diagram 
\begin{equation}\label{collapse map comm dig for shoer ex seq}
    \begin{tikzcd}
	0 & {\sfrac{\pii{n}(Y)}{\im{(\Sigma h_1)^*}}} & {{[M_1,Y]}} & {\kr{h_1^*}} & 0 \\
	0 & {\sfrac{\pii{n}(Y)}{(\im{(\Sigma \xi)^*}}}& {[M_1\csum M_2,Y]} & {\kr{\xi^*}} & 0
	\arrow[from=1-1, to=1-2]
	\arrow["{\wi{d^*}}", from=1-2, to=1-3]
	\arrow["\iota_1^*", from=1-3, to=1-4]
	\arrow[from=1-4, to=1-5]
	\arrow["\wi{(\Sigma g)^*}", from=1-2, to=2-2]
	\arrow[from=2-1, to=2-2]
	\arrow["{\wi{d^*}}", from=2-2, to=2-3]
	\arrow["{\psi^*}", from=1-3, to=2-3]
	\arrow["\iota^*", from=2-3, to=2-4]
	\arrow["\wi{\phi^*}", from=1-4, to=2-4]
	\arrow[from=2-4, to=2-5]
\end{tikzcd}
\end{equation}
where horizontal short exact sequences are obtained using \Cref{thm:gen short exact sequence}. Observe that, the induced map $\wi{(\Sigma g)^*}:\sfrac{\pi_{n}(Y)}{\im{(\Sigma h_1)^*}}\to\sfrac{\pi_{n}(Y)}{\im{(\Sigma \xi)^*}}  $, by $ (\Sigma g)^* $, is surjective; and the induced map $ \wi{\phi^*}:\kr{h_1^*}\to \kr{\xi^*} $, by $ \phi^* $, is injective.

\begin{enumerate}[$ (i) $]
\item  Let $ \psi^*:[M_1,Y]\to [M_1\csum M_2,Y] $ be injective. This implies $ \wi{(\Sigma g)^*} $ is an isomorphism. Note that we always have $ \im{(\Sigma h_1)^*}\subseteq \im{(\Sigma\xi)^*} $. Therefore, we obtain $ \im{(\Sigma h_1)^*}=\im{(\Sigma\xi)^*} $.

On the other hand, $ \im{(\Sigma h_1)^*}= \im{(\Sigma\xi)^*} $ implies the map $  \wi{(\Sigma g)^*}  $ is injective. Using the fact that map $ \wi{\phi^*} $ is injective in \eqref{collapse map comm dig for shoer ex seq}, by five lemma we acquire the result.

\item Suppose the map $ \psi^*:[M_1,Y]\to [M_1\csum M_2,Y] $ is surjective. Therefore, in \eqref{collapse map comm dig for shoer ex seq}, the map $ \wi{\phi^*} $ turns out to be surjective, and hence an isomorphism.

Conversely, suppose the map $ \wi{\phi^*} $ be an isomorphism. Then, using the fact that $  \wi{(\Sigma g)^*} $ is surjective, by five lemma we conclude that the map $ \psi^*  $ is surjective, which completes the proof.
  \end{enumerate}

 \end{proof}
Recall that, the kernel of the induced degree one map map $d^*:[\Sp{n}, Top/O]\to[M, Top/O]$ can be identified with the concordance inertia group of $M$, $I_c(M)$. In the \Cref{M to M connected sum N} $(i)$, if we consider $Y = Top/O$, then the equivalent statement we have is as follows:
\begin{cor}
\label{si is injective iff ic(m1+m2)=ic(m1)}
The map $ \psi^*:\cc{M_1}\to \cc{M_1\csum M_2}$ is injective if and only if $ I_c(M_1\csum M_2)=I_c(M_1) $.
    \end{cor}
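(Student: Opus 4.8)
The plan is to deduce the corollary directly from \Cref{M to M connected sum N}$(i)$ by specializing to $Y=\mathit{Top}/O$ and translating the image condition appearing there into the language of concordance inertia groups. First I would invoke the Kirby--Siebenmann bijection $\cc{M}\cong[M,\mathit{Top}/O]$, so that the collapse map $\psi^*:\cc{M_1}\to\cc{M_1\csum M_2}$ is literally the induced map $\psi^*:[M_1,\mathit{Top}/O]\to[M_1\csum M_2,\mathit{Top}/O]$ analyzed in \Cref{M to M connected sum N}$(i)$. That theorem asserts that $\psi^*$ is injective if and only if $\im{(\Sigma\xi)^*}=\im{(\Sigma h_1)^*}$, where $\xi=(h_1\vee h_2)\circ p$ is the attaching map of $M_1\csum M_2$. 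It therefore suffices to identify these two images with $I_c(M_1\csum M_2)$ and $I_c(M_1)$ respectively.

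The mechanism for this identification is the exactness relation $\kr{d^*}=\im{(\Sigma\xi)^*}$, which is exactly the identity $\kr{(\Sigma^s d)^*}=\im{(\Sigma^{s+1}\xi)^*}$ used (with $s=0$) in the proof of \Cref{thm:gen short exact sequence}, coming from exactness of the $[\_,Y]$-image of the cofiber sequence at $[\Sp{n},Y]$. Combined with the recalled identification $I_c(M)=\kr{d^*}$ for the degree one map $d:M\to\Sp{n}$ and $Y=\mathit{Top}/O$, this gives $I_c(M_1\csum M_2)=\kr{d^*}=\im{(\Sigma\xi)^*}$ for the connected sum. Applying the same relation to the single manifold $M_1$, whose attaching map is $h_1$ (so that its version of $\xi$ is simply $h_1$), yields $I_c(M_1)=\kr{d_1^*}=\im{(\Sigma h_1)^*}$, where $d_1$ is the degree one map of $M_1$. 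Both images sit inside $[\Sp{n},\mathit{Top}/O]=\pii{n}(\mathit{Top}/O)\cong\Th{n}$, so the comparison is between subgroups of a single ambient group.

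Putting these together, the criterion $\im{(\Sigma\xi)^*}=\im{(\Sigma h_1)^*}$ of \Cref{M to M connected sum N}$(i)$ reads precisely as $I_c(M_1\csum M_2)=I_c(M_1)$, which is the corollary. As a consistency check on the nontrivial direction, the homotopy commutative diagram in the proof of \Cref{M to M connected sum N} gives $d\simeq d_1\circ\psi$, hence $d^*=\psi^*\circ d_1^*$ and consequently $I_c(M_1)=\kr{d_1^*}\subseteq\kr{d^*}=I_c(M_1\csum M_2)$; thus the always-valid inclusion is one way, and injectivity of $\psi^*$ is exactly what forces the reverse inclusion. I do not expect a genuine obstacle here, since all the substantive content already resides in \Cref{M to M connected sum N}$(i)$ and in the identification $I_c(M)=\kr{d^*}$; the only point needing care is applying the exactness identity $\kr{d^*}=\im{(\Sigma\xi)^*}$ consistently to both $M_1$ and $M_1\csum M_2$ with both images viewed inside $\pii{n}(\mathit{Top}/O)$.
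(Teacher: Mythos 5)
Your proposal is correct and follows the paper's own route: the paper likewise obtains the corollary by specializing \Cref{M to M connected sum N}$(i)$ to $Y=\mathit{Top}/O$ and identifying $\im{(\Sigma h_1)^*}=\kr{d_1^*}=I_c(M_1)$ and $\im{(\Sigma\xi)^*}=\kr{d^*}=I_c(M_1\csum M_2)$ via exactness of the cofiber long exact sequences. Your added consistency check on the automatic inclusion $I_c(M_1)\subseteq I_c(M_1\csum M_2)$ is a harmless bonus not spelled out in the paper.
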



\section{Sum formula and the concordance structure set of projective spaces}\label{sec 5}

In this section, we begin with establishing a crucial result, stating that the concordance inertia group of a connected sum of manifolds, is equal to the sum of the concordance inertia groups of individual manifolds. In other words, the concordance inertia groups of individual manifolds hold the complete information needed to determine the concordance inertia group of their connected sum.


Let $ M_i $ be a closed oriented smooth $ n $-manifold with the attaching map $ h_i:\Sp{n-1}\to M_i^{(n-1)} $, for $ i\in I $. Recall that, if $d: M_i\to \Sp{n}$ is a degree one map, then the kernel of the induced map $d^*:[\Sp{n},Top/O]\to [M_i,Top/O]$ can be identified with the concordance inertia group of $ M_i $, $I_c(M_i)$. The induced long exact sequence associated with the map $ h_i $,
\begin{tikzcd}
	{\cdots[\Sigma M_i^{n-1},Top/O]} & {[M_i,Top/O]} & {[\Sp{n},Top/O]}
	\arrow["{(\smallsigma h_i)^*}", from=1-1, to=1-2]
	\arrow["{d^*}", from=1-2, to=1-3]
\end{tikzcd}, gives $$ I_c(M_i)=\kr{d^*}=\im{(\Sigma h_i)^*} .$$ Now, let us consider $ Y=Top/O $ in the sequence \eqref{main seq} then we have $ \kr{d^*}=\im{(\Sigma\xi)^*} $. Combining this with the equation \eqref{eq:im of xi is equal to the sum of hi}, we have the following theorem.

\begin{thm}\label{ic of connected sum is sum of I_c}
Let $ M_1,M_2,\dots,M_k $ be closed oriented smooth $ n $-dimensional manifolds. Then 
    $$ I_c\left({\csum_{i\in I}} M_i\right)=\underset{i\in I}{\sum}I_c(M_i)~.$$
In particular, if $ M_i=M $, for all $ i=1,2,\dots,k $, then 
$$I_c\left({\csum_k} M\right)=I_c(M) ~ .$$

\end{thm}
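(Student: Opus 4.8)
The plan is to chain together the identifications assembled in the paragraph preceding the statement, treating $I_c$ as the kernel of an induced degree one map and then transporting that kernel through exactness into the image of a suspended attaching map. First I would fix the ambient group: since $\cc{\Sp{n}}\cong[\Sp{n},Top/O]\cong\Th{n}$, every inertia group in sight is a subgroup of the single group $[\Sp{n},Top/O]$, so that the right-hand side $\sum_{i\in I}I_c(M_i)$ is an honest sum of subgroups of $[\Sp{n},Top/O]$. Making this bookkeeping explicit is what renders the asserted equality meaningful in the first place.

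Next, for each individual $M_i$ I would apply the contravariant functor $[\_,Top/O]$ to the cofiber sequence $\Sp{n-1}\overset{h_i}{\to}M_i^{(n-1)}\hookrightarrow M_i$; exactness at $[\Sp{n},Top/O]$ yields $I_c(M_i)=\kr{d^*}=\im{(\Sigma h_i)^*}$, which is the $s=0$ instance already recorded in the text. Viewing $\csum_{i\in I}M_i$ as a manifold with top-cell attaching map $\xi=(\vee_{i\in I}h_i)\circ p$ and running the same argument through the sequence \eqref{main seq} with $Y=Top/O$ gives $I_c\big(\csum_{i\in I}M_i\big)=\kr{d^*}=\im{(\Sigma\xi)^*}$.

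The computation then closes by quoting \eqref{eq:im of xi is equal to the sum of hi} at $s=0$, namely $\im{(\Sigma\xi)^*}=\sum_{i\in I}\im{(\Sigma h_i)^*}$; chaining the three equalities produces $I_c\big(\csum_{i\in I}M_i\big)=\sum_{i\in I}I_c(M_i)$. For the special case $M_i=M$ every summand equals $I_c(M)$, and since any subgroup of an abelian group is idempotent under addition, the sum collapses to $I_c(M)$, giving $I_c\big(\csum_k M\big)=I_c(M)$.

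Most of the substantive work is imported from \Cref{thm:gen short exact sequence} and its proof, so I expect no serious obstacle in the chain of equalities itself. The one point demanding care is the compatibility of the various degree one maps: I must confirm that the map $d^*$ used for $\csum_{i\in I}M_i$ and the maps induced by the degree one maps of each $M_i$ all share the common source $[\Sp{n},Top/O]$, and that the identification $\kr{d^*}=I_c(-)$ is applied consistently, so that the subgroups $\im{(\Sigma h_i)^*}$ genuinely sit inside the common ambient group and their sum is exactly the expression on the right. Once this is pinned down, the theorem is immediate.
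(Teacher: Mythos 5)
Your proposal matches the paper's argument essentially verbatim: both identify $I_c(M_i)=\kr{d^*}=\im{(\Sigma h_i)^*}$ via the cofiber sequence of each attaching map, identify $I_c(\csum_{i\in I}M_i)=\im{(\Sigma\xi)^*}$ from sequence \eqref{main seq} with $Y=Top/O$, and conclude by the pinch-map identity \eqref{eq:im of xi is equal to the sum of hi}. The extra bookkeeping you flag (all kernels living as subgroups of the common group $[\Sp{n},Top/O]\cong\Th{n}$) is correct and implicit in the paper.
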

\begin{obs}
The group $I_c(\csum_{i\in I}M_i)$ is trivial if and only if each $I_c(M_i)$ is trivial for $i\in I$. 
\end{obs}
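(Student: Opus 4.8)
The plan is to derive this Observation as an immediate consequence of the sum formula established in \Cref{ic of connected sum is sum of I_c}, which identifies
\[
I_c\!\left(\csum_{i\in I}M_i\right)=\sum_{i\in I}I_c(M_i)
\]
as subgroups of $\Theta_n$. Once this equality is in hand, the statement reduces to the elementary fact that a sum of subgroups of an abelian group vanishes if and only if each summand vanishes, so I expect no genuine obstacle here; the entire analytic content sits in the sum formula itself, which I am permitted to assume.

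First I would record the forward implication. Suppose $I_c(\csum_{i\in I}M_i)$ is trivial. By the sum formula this means $\sum_{i\in I}I_c(M_i)=0$ inside $\Theta_n$. For each fixed $j\in I$ the inclusion $I_c(M_j)\subseteq\sum_{i\in I}I_c(M_i)$ holds, since every element of $I_c(M_j)$ is in particular an element of the generated sum. Hence $I_c(M_j)\subseteq 0$, forcing $I_c(M_j)$ to be trivial for every $j\in I$.

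For the converse, suppose each $I_c(M_i)$ is trivial. Then $\sum_{i\in I}I_c(M_i)=\sum_{i\in I}0=0$, and applying the sum formula once more gives $I_c(\csum_{i\in I}M_i)=0$. Combining the two directions yields the claimed equivalence. The only point worth flagging is that the argument relies on $\Theta_n$ being abelian so that the sum of subgroups is again a subgroup containing each summand; this is guaranteed throughout the paper since $\Theta_n$ is the abelian group of homotopy spheres and each $I_c(M_i)$ is a subgroup of it.
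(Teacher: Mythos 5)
Your argument is correct and is exactly the intended one: the paper states this Observation without proof as an immediate consequence of \Cref{ic of connected sum is sum of I_c}, and your reduction to the fact that a sum of subgroups of $\Theta_n$ vanishes if and only if each summand vanishes is precisely the content being left implicit.
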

     

  

Now we will illustrate the \Cref{si is injective iff ic(m1+m2)=ic(m1)}.
Let $ A $ be $ (n-1) $-connected closed oriented smooth $ 2n$-manifold. According to Wall's theorem, $A$ can be represented as the minimal cell decomposition $ A=\left(\vee_r\Sp{n}\right)\cup_h\D{2n} $, where $ h:\Sp{2n-1}\to\vee_r\Sp{n} $ is the attaching map. Within the range of $3\leq n\leq 6$, by utilizing the cofiber sequence derived from the attaching map $h$ and the fact that $\Th{m}=0$  for $m=3,4,5,6,12$, it becomes apparent that $I_c(A)=0$. Thus, because of the \Cref{ic of connected sum is sum of I_c}, for any closed oriented smooth $ 2n $-dimensional manifold $M$ with $3\leq n \leq 6$ we get $I_c(M\#A)=I_c(M)$, and the \Cref{si is injective iff ic(m1+m2)=ic(m1)} says that the map $ \psi^*:\cc{M}\to \cc{M\csum A}$ is injective. In these cases, we also get that the map $\psi^*$ is surjective. This can be easily verified with the help of the cofiber sequence $\vee_r\Sp{n}\overset{\iota}{\hookrightarrow} {M\#A} \overset{q}{\to} M$, where $\iota$ is the inclusion of $ (2n-1) $-skeleton of $ A $. As an outcome, we get the following result.
\begin{cor}\label{cor: C of n-1 connected 2n mfld}
Let $ M$ and $ A $ be closed oriented smooth $ 2n $-dimensional manifolds. In addition, if $ A $ is $ (n-1) $-connected manifold then, for $ 3\leq n \leq 6$, $$\cc{M\csum A}\cong\cc{M}~.$$
\end{cor}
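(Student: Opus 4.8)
The plan is to show that the collapse map $\psi:M\csum A\to M$ induces a \emph{bijection} $\psi^*:\cc{M}\to\cc{M\csum A}$; since both sides are abelian groups under the identification $\cc{M}\cong\ftop{M}$ and $\psi^*$ is a group homomorphism, a bijection will automatically be an isomorphism. I would prove injectivity and surjectivity separately, and the single computational input feeding both is the vanishing of the concordance inertia group, $I_c(A)=0$.

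First I would compute $I_c(A)=0$. By Wall's theorem the $(n-1)$-connected $2n$-manifold $A$ has a minimal decomposition $A=(\vee_r\Sp{n})\cup_h\D{2n}$, and as recalled before the corollary, $I_c(A)=\im{(\Sigma h)^*}$, where $(\Sigma h)^*$ runs from $\ftop{\vee_r\Sp{n+1}}\cong\opl{r}\Th{n+1}$ into $\ftop{\Sp{2n}}\cong\Th{2n}$. For $3\leq n\leq 6$ I would verify case by case that this image is forced to vanish: when $n=3,4,5$ the source group $\Th{n+1}$ is already zero (since $\Th{4}=\Th{5}=\Th{6}=0$), whereas when $n=6$ the source $\Th{7}=\Z/28$ is nonzero, so one must instead use that the \emph{target} $\Th{2n}=\Th{12}=0$ annihilates the image. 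Hence $I_c(A)=0$ throughout the range. Combining this with the sum formula of \Cref{ic of connected sum is sum of I_c} gives $I_c(M\csum A)=I_c(M)+I_c(A)=I_c(M)$, and then \Cref{si is injective iff ic(m1+m2)=ic(m1)} yields injectivity of $\psi^*$ at once.

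For surjectivity I would switch to the cofiber sequence $\vee_r\Sp{n}\overset{\iota}{\hookrightarrow}M\csum A\overset{\psi}{\to}M$, where $\iota$ is the inclusion of the $(2n-1)$-skeleton of $A$ and the projection is identified with the collapse map $\psi$. Applying $\ftop{\_}$ produces the exact sequence $\ftop{M}\overset{\psi^*}{\to}\ftop{M\csum A}\overset{\iota^*}{\to}\ftop{\vee_r\Sp{n}}$, whose right-hand term is $\opl{r}\Th{n}$. Since $\Th{n}=0$ for $3\leq n\leq 6$, the map $\iota^*$ lands in the zero group, so $\kr{\iota^*}=\ftop{M\csum A}$, and exactness then forces $\psi^*$ to be surjective. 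Combined with the injectivity above, $\psi^*$ is an isomorphism, giving $\cc{M\csum A}\cong\cc{M}$.

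The step I expect to be the main obstacle is the vanishing computation for $I_c(A)$, precisely because it is \emph{not} uniform in $n$: for $n=6$ the source group is nonzero and one is obliged to exploit $\Th{12}=0$ on the target side instead. A secondary point requiring care is justifying that the cofiber projection in the surjectivity argument genuinely coincides with the collapse map $\psi$ — i.e. that collapsing the skeleton $\vee_r\Sp{n}\subset M\csum A$ reattaches the top cell via the $M$-component of the attaching map $\xi=(h_M\vee h)\circ p$ and recovers exactly $M$ — since the exact sequence delivering surjectivity is only applicable once that identification is secured.
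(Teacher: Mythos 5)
Your proof is correct and follows essentially the same route as the paper: both establish $I_c(A)=0$ from the Wall decomposition $A=(\vee_r\Sp{n})\cup_h\D{2n}$ together with the vanishing of $\Th{m}$ for $m=4,5,6,12$, deduce injectivity of $\psi^*$ from \Cref{ic of connected sum is sum of I_c} and \Cref{si is injective iff ic(m1+m2)=ic(m1)}, and obtain surjectivity from the cofiber sequence $\vee_r\Sp{n}\hookrightarrow M\csum A\to M$ using $\Th{n}=0$ for $3\leq n\leq 6$. Your case analysis for $n=6$ (source $\Th{7}\cong\Z_{28}$ nonzero, so one must use the target $\Th{12}=0$) makes explicit a point the paper leaves implicit, but it is the same argument.
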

For example, if $ M^{2n} $ is a closed oriented smooth manifold then we have the following results for any $ k\in \N $:
\begin{enumerate}[$(i)$]
\item  $ \cc{M^8\csum(\csum_k\Hp{2})} \cong \cc{M^8}$.

\item 	$ \cc{M^{10}\csum(\csum_k(\Sp{5}\times\Sp{5}))}\cong\cc{M^{10}}$.

\item   $ \cc{M^{12}\csum(\csum_k(\Sp{6}\times\Sp{6}))}\cong\cc{M^{12}} $.

\end{enumerate}


Recall that, the \emph{homotopy inertia group} $I_h(M^n)$ consists of $\Sigma^n\in \Th{n}$ such that there exists a diffeomorphism from $M^n\csum\Sigma^n\to M^n$ that is homotopic to the canonical homeomorphism $h_{\smallsigma^n}:M^n\csum\Sigma^n\to M^n$. The sum formula is also valid for the homotopy inertia group under certain conditions on manifolds, stated below.
\begin{propn}\label{ih of connected sum}
Let $ M_1,M_2,\dots,M_k $ be closed simply-connected smooth $ n $-dimensional manifolds such that for each $ M_i $ odd degree cohomologies with integral and mod-2 coefficients vanish. Then
$$I_h\left(\underset{i\in I}{\csum}M_i\right)
=\underset{i\in I}{\sum}I_h(M_i)~.$$ 
\end{propn}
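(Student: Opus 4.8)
The plan is to run the argument of \Cref{ic of connected sum is sum of I_c} one classifying space higher, replacing the Kirby--Siebenmann identification $\cc{M}\cong[M,Top/O]$ (which made $I_c$ the literal kernel of a cohomotopy-type operation) by the normal-invariant description $[M,G/O]$ coming from smooth surgery theory. The first task is to build an algebraic model of $I_h$ parallel to the chain $I_c(M_i)=\kr{d^*}=\im{(\Sigma h_i)^*}$ used for the concordance inertia group. I would consider the assignment $\Phi_M:\Th{n}\to hS(M)$, $\Sigma\mapsto(M\csum\Sigma,h_{\smallsigma^n})$, into the smooth homotopy structure set, so that by definition $I_h(M)=\kr{\Phi_M}$; the normal invariant map $\eta:hS(M)\to[M,G/O]$ then sends $\Phi_M(\Sigma)$ to $d^*(s(\Sigma))$, where $d:M\to\Sp{n}$ is the degree one map and $s:\Th{n}\to\pii{n}(G/O)$ is the natural map with kernel $bP_{n+1}$. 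The two hypotheses are exactly what is needed to pass from $\eta\circ\Phi_M$ back to $\Phi_M$: simple-connectivity places us in the simply connected surgery exact sequence with the constant groups $L_*(\Z)$, and the vanishing of the odd-degree integral and mod $2$ cohomology forces a cell structure in even degrees and collapses the characteristic-class contributions, so that the surgery obstruction $\theta:[M,G/O]\to L_n(\Z)$ and the $L_{n+1}(\Z)$-action on $hS(M)$ add no elements to $\kr{\Phi_M}$ beyond those already seen on normal invariants. Establishing this reduction is the step I expect to be the genuine obstacle, precisely because $bP_{n+1}\subseteq\kr{(d^*\circ s)}$ always, so the naive guess ``$I_h=\ker$ on normal invariants'' is false without controlling the $L$-theory, and it is here that the even-cohomology assumption does its work.

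Granting the reduction, the additivity is then formally the same computation as in the concordance case. Applying $[\_,G/O]$ to the cofiber sequence $\Sp{n-1}\overset{\xi}{\to}\vee_{i\in I}M_i^{(n-1)}\overset{\iota}{\hookrightarrow}\csum_{i\in I}M_i$ with $\xi=(\vee_{i\in I}h_i)\circ p$ gives, by exactness, $\kr{d^*}=\im{(\Sigma\xi)^*}$ inside $\pii{n}(G/O)$, and the pinch-map identity $(\Sigma\xi)^*=\sum_{i\in I}(\Sigma h_i)^*$ recorded in \eqref{eq:im of xi is equal to the sum of hi} upgrades this to $\im{(\Sigma\xi)^*}=\sum_{i\in I}\im{(\Sigma h_i)^*}=\sum_{i\in I}\kr{d_i^*}$, where $d_i:M_i\to\Sp{n}$ are the degree one maps. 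Transporting this equality back through the normal-invariant model of the first paragraph yields $I_h(\csum_{i\in I}M_i)=\sum_{i\in I}I_h(M_i)$, in exact analogy with how $\kr{d^*}=\im{(\Sigma\xi)^*}$ produced \Cref{ic of connected sum is sum of I_c}.

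As a consistency check that also supplies the easy half directly, I would verify the inclusion ``$\supseteq$'' by hand: if a diffeomorphism $M_j\csum\Sigma\to M_j$ is homotopic to $h_{\smallsigma^n}$, then taking its connected sum with the identity on the remaining summands produces a diffeomorphism $\csum_{i\in I}M_i\csum\Sigma\to\csum_{i\in I}M_i$ homotopic to the canonical homeomorphism, whence $I_h(M_j)\subseteq I_h(\csum_{i\in I}M_i)$ and so $\sum_{i\in I}I_h(M_i)\subseteq I_h(\csum_{i\in I}M_i)$; this direction needs neither hypothesis. The reverse inclusion carries the real content: an element of $I_h(\csum_{i\in I}M_i)$ is fed into the identification $[\vee_{i\in I}M_i^{(n-1)},G/O]\cong\opl{i\in I}[M_i^{(n-1)},G/O]$, and one reads off from the wedge splitting that it decomposes as a sum of classes lying in the respective $I_h(M_i)$ -- the even-degree cohomology hypothesis being precisely what guarantees that this wedge decomposition is compatible with the normal-invariant detection of $I_h$ set up in the first paragraph.
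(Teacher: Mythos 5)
Your proposal correctly isolates where the difficulty lies, but it does not resolve it: everything hinges on the ``reduction'' of your first paragraph --- that under the two hypotheses $I_h(M)$ is detected on normal invariants, i.e.\ equals $\kr{d^*\circ s}$ for $d^*:\pii{n}(G/O)\to[M,G/O]$ --- and you explicitly defer this step (``the step I expect to be the genuine obstacle'') and then proceed by granting it. As written, that is a theorem-shaped hole rather than a proof. The paper fills exactly this hole by citation: \cite[Corollary 3.2]{RKHomotopyInertiaGroupsAndTangentialStructures} states that for a closed simply-connected smooth manifold whose odd-degree integral and mod-$2$ cohomologies vanish one has $I_h(M)=I_c(M)$. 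The paper applies this to each $M_i$ \emph{and} to $\csum_{i\in I}M_i$ (the hypotheses pass to the connected sum, since it is again simply connected and its cohomology below the top degree is the direct sum of those of the $M_i$), and then concludes immediately from \Cref{ic of connected sum is sum of I_c}. So the intended argument never touches $G/O$ at all; it reduces to the already-proved additivity of $I_c$, which lives entirely in $\Th{n}=\pii{n}(Top/O)$.

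Even granting your reduction in the form you state it, the ``transport back'' in your second paragraph is not formal. The cofiber-sequence computation gives $\kr{d^*}=\sum_{i\in I}\im{(\Sigma h_i)^*}=\sum_{i\in I}\kr{d_i^*}$ inside $\pii{n}(G/O)$, but $I_h$ lives in $\Th{n}$ and is recovered by pulling back along $s:\Th{n}\to\pii{n}(G/O)$; for subgroups $A_i\subseteq\pii{n}(G/O)$ one only has $\sum_{i\in I}s^{-1}(A_i)\subseteq s^{-1}\left(\sum_{i\in I}A_i\right)$ in general, and the reverse inclusion --- which is precisely the hard half of the proposition --- would require a separate argument about how $\im{s}$ meets the sum. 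Working with $I_c$ inside $Top/O$, as the paper does, avoids this issue because no change of groups occurs. Your ``easy half'' ($\sum_{i\in I}I_h(M_i)\subseteq I_h(\csum_{i\in I}M_i)$ by summing a diffeomorphism with identities) is fine and needs no hypotheses, but it does not shorten the genuine content. To complete your route you would either have to prove the normal-invariant detection of $I_h$ under the stated hypotheses (essentially reproving the cited Corollary 3.2) and then address the preimage-of-sums point, or simply quote $I_h=I_c$ and fall back on \Cref{ic of connected sum is sum of I_c} as the paper does.
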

\begin{proof}
 By \cite[Corollary3.2]{RKHomotopyInertiaGroupsAndTangentialStructures}, if $M_i$ is a closed simply-connected smooth manifold such that odd degree cohomologies with integral and mod-2 coefficients vanish, then $I_h(M_i)=I_c(M_i)$.
Thus, assuming conditions from the statement we obtain ${\sum_{i\in I}}I_h(M_i)={\sum_{i\in I}}I_c(M_i)$ and $I_h({\csum_{i\in I}}M_i)=I_c({\csum_{i\in I}}M_i)$. Combining this with the \Cref{ic of connected sum is sum of I_c} completes the proof.
\end{proof}
\begin{rem}\label{ic of connected sum of projective spaces is trivial} Let $k,n\in \N$.
     \begin{enumerate}[(i)]
    \item If $n\leq 8$ then $I_h(\csum_k\Cp{n})=0$, since $I_c(\Cp{n})=0$ by \cite{Kawakubo}.
    
    \item If $n\leq 4$ then $I_h(\csum_k\Hp{n})=0$, since $I_c(\Hp{n})=0$ by \cite{AravindaExoticStructuresAndQuaternionicHyperbolicManifolds}.
    
    \item   By \cite{RKInertiaGroupsAndSmoothStructuresOf2nmfld} $I_c(\Op{2})=0$ hence $I_h(\csum_k\Op{2})=0$.
\end{enumerate}
 \end{rem}
In the upcoming subsection, we explore several computational corollaries of Theorems discussed thus far.

\subsection{Concordance structure set of \texorpdfstring{$k$}{k}-fold connected sum of projective spaces}\label{sec 3.1}

Now, we proceed to calculate the concordance structure set for $\csum_k\mathbb{P}^n$, where $\mathbb{P}^n$ denotes either $\Cp{n}$, $\Hp{n}$, or $\Op{2}$ with corresponding real dimensions $2n$, $4n$, or $16$ respectively.


 Now, let us recall that $ \Cp{n}=\Cp{n-1}\cup_{h_C}\D{2n} $, where $ h_C:\Sp{2n-1}\to\Cp{n-1} $ is the Hopf map. The groups $\cc{\Cp{n} } $ are computed for $n=3,4$ in \cite{RK16ClassificationSmoothStructuresComplexProjectiveSpace} and for $5 \leq n \leq 8$ in \cite{RK-2017-CP5-8}. Using \Cref{thm:gen short exact sequence,,main thm for connected sum of same copy}, now we compute the concordance structure set of $k$-fold connected sum of complex projective spaces.

\begin{cor}\label{cor:sq for cpn}
\begin{enumerate}[(i)]
\item  There is an isomorphism $$ d^*:\Theta_{8}\lr\cc{\csum_k\Cp{4}}, $$
		where $ \Theta_{8}\cong\Z_2 $.

\item There is a short exact sequence
		\[\begin{tikzcd}\label{c(cp5)}
			0 & {\Theta_{10}} & {\cc{\csum_k\Cp{5}}} & {\opl{k}\cc{\Cp{4}}} & 0,
			\arrow[from=1-1, to=1-2]
			\arrow["{{d^*}}", from=1-2, to=1-3]
			\arrow["{{\iota^*}}", from=1-3, to=1-4]
			\arrow[from=1-4, to=1-5]
		\end{tikzcd}\]
		where $  \Theta_{10}\cong\Z_2\opl{}\Z_3 $ and $  \cc{\Cp{4}}\cong\Z_2$.

\item There is an isomorphism 
	$$\iota^*:\cc{\csum_k\Cp{6}} \lr\opl{k-1}\cc{\Cp{5}}\oplus\kr{h_C^*},$$
	where $ \cc{\Cp{5}}\cong \Z_2^2\opl{}\Z_3$ and $ \kr{h_C^*:\cc{\Cp{5}}\lr\Th{11}}\cong\Z_2\oplus\Z_3 $.

\item There is a short exact sequence
		\[\begin{tikzcd}\label{c(cp7)}
			0 & {\Theta_{14}} & {\cc{\csum_k\Cp{7}}} & {\opl{k-1}\cc{\Cp{6}}\opl{}\kr{h_C^*}} & 0,
			\arrow[from=1-1, to=1-2]
			\arrow["{{d^*}}", from=1-2, to=1-3]
			\arrow["\iota^*", from=1-3, to=1-4]
			\arrow[from=1-4, to=1-5]
		\end{tikzcd}\]
		where $ \Theta_{14}\cong \Z_2$, $ \cc{\Cp{6}}\cong\Z_2\opl{} \Z_3$ and $ \kr{h_C^*:\cc{\Cp{6}}\lr\Th{13}}\cong\Z_2 $.

\item  There is a short exact sequence
		\[\begin{tikzcd}\label{c(cp8)}
			0 & {\Theta_{16}} & {\cc{\csum_k\Cp{8}}} & {\opl{k-1}\cc{\Cp{7}}\opl{}\kr{h_C^*} }& 0,
			\arrow[from=1-1, to=1-2]
			\arrow["{{d^*}}", from=1-2, to=1-3]
			\arrow["\iota^*", from=1-3, to=1-4]
			\arrow[from=1-4, to=1-5]
		\end{tikzcd}\]
		where $ \Theta_{16}\cong\Z_2 $, $ \cc{\Cp{7}}\cong\Z_2^2 $ and $ \kr{h_C^*:\cc{\Cp{7}}\lr\Th{15}}\cong\Z_2  $.
		
\end{enumerate}
\end{cor}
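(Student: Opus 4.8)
The plan is to specialize the short exact sequence of \Cref{main thm for connected sum of same copy} to $M=\Cp{n}$, $Y=Top/O$ and $s=0$, and then feed in the known single-copy values. Recall the minimal cell structure $\Cp{n}=\Cp{n-1}\cup_{h_C}\D{2n}$ with $h_C:\Sp{2n-1}\to\Cp{n-1}$ the Hopf map, so that $M^{(2n-1)}=\Cp{n-1}$, $[M^{(2n-1)},Top/O]=\cc{\Cp{n-1}}$, and $\pii{2n}(Top/O)\cong\Th{2n}$ (as $2n\geq 8\geq 5$). With these substitutions \Cref{main thm for connected sum of same copy} yields
\[
0\lr \ckr{(\Sigma h_C)^*}\xrightarrow{\,\wi{d^*}\,}\cc{\csum_k\Cp{n}}\xrightarrow{\,\iota^*\,}\Big(\opl{k-1}\cc{\Cp{n-1}}\Big)\oplus\kr{h_C^*}\lr 0 .
\]

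The first step is to collapse the left-hand term. Since $\ckr{(\Sigma h_C)^*}=\Th{2n}/\im{(\Sigma h_C)^*}$ and $\im{(\Sigma h_C)^*}=I_c(\Cp{n})$, I invoke \Cref{ic of connected sum of projective spaces is trivial}, which gives $I_c(\Cp{n})=0$ for $n\leq 8$. Hence $\ckr{(\Sigma h_C)^*}\cong\Th{2n}$ and the universal sequence becomes
\[
0\lr \Th{2n}\xrightarrow{\,d^*\,}\cc{\csum_k\Cp{n}}\xrightarrow{\,\iota^*\,}\Big(\opl{k-1}\cc{\Cp{n-1}}\Big)\oplus\kr{h_C^*}\lr 0 ,
\]
which is exactly the shape of each of the five assertions; it remains to identify the groups $\Th{2n}$, $\cc{\Cp{n-1}}$ and $\kr{h_C^*}$.

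The values $\Th{2n}$ are standard, and $\cc{\Cp{m}}$ for $3\leq m\leq 8$ are taken from the cited computations ($\cc{\Cp{3}}=0$, $\cc{\Cp{4}}\cong\Z_2$, and so on). The only genuinely new quantity is $\kr{h_C^*}$, where $h_C^*:\cc{\Cp{n-1}}\to\Th{2n-1}$. To compute it I use the same sequence with $k=1$: there $\opl{k-1}=0$, so it reduces to $0\to\Th{2n}\to\cc{\Cp{n}}\to\kr{h_C^*}\to0$. As all groups here are finite, order-counting gives $|\kr{h_C^*}|=|\cc{\Cp{n}}|/|\Th{2n}|$; since $\kr{h_C^*}$ is moreover a subgroup of the known group $\cc{\Cp{n-1}}$, its isomorphism type is pinned down. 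For $n=5$ this gives $|\kr{h_C^*}|=12/6=2$, forcing $\kr{h_C^*}=\cc{\Cp{4}}\cong\Z_2$, so that $\opl{k-1}\cc{\Cp{4}}\oplus\kr{h_C^*}=\opl{k}\cc{\Cp{4}}$, giving $(ii)$; for $n=6$ it gives $|\kr{h_C^*}|=6$, the unique order-$6$ subgroup of $\Z_2^2\oplus\Z_3$ being $\Z_2\oplus\Z_3$. The two extreme cases need no $\kr$-computation: when $n=4$ the right-hand term vanishes (both $\cc{\Cp{3}}$ and $\kr{h_C^*}$ are $0$), so $d^*$ is an isomorphism onto $\Th{8}\cong\Z_2$, giving $(i)$; when $n=6$ one has $\Th{12}=0$, so $\iota^*$ is an isomorphism, giving $(iii)$. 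The remaining cases $(iv)$ and $(v)$ follow identically.

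I expect the main obstacle to be the determination of $\kr{h_C^*}$, that is, understanding the map on $[-,Top/O]$ induced by the Hopf attaching map. A direct homotopy-theoretic computation of $h_C^*$ into the large groups $\Th{2n-1}$ would be delicate; the order-counting argument via the $k=1$ sequence, which crucially uses $I_c(\Cp{n})=0$ to make $d^*$ injective, sidesteps this by reducing the problem to identifying a subgroup of known order inside the already-computed group $\cc{\Cp{n-1}}$. Care is needed only to confirm that the order determines the isomorphism type uniquely, which it does in each of the five cases.
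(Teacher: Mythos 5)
Your proposal is correct and follows essentially the same route as the paper: specialize \Cref{thm:gen short exact sequence} and \Cref{main thm for connected sum of same copy} to $Y=Top/O$, use $I_c(\Cp{n})=0$ (\Cref{ic of connected sum of projective spaces is trivial}) to identify the left-hand term with $\Th{2n}$ and make $d^*$ injective, and then substitute the known single-copy values. The one point where you diverge is the determination of $\kr{h_C^*}$: the paper quotes the computed image of $h_C^*$ from the cited sources (Mosher's nullhomotopy of $d\circ h_C$ for case $(ii)$, and Kasilingam's explicit descriptions of $\im{h_C^*}$ for $(iii)$--$(v)$), whereas you recover $|\kr{h_C^*}|=|\cc{\Cp{n}}|/|\Th{2n}|$ by order-counting in the $k=1$ instance of the same sequence and then identify the subgroup of $\cc{\Cp{n-1}}$ of that order. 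This is a legitimate shortcut --- in each of the five cases the order does determine the subgroup up to isomorphism, and for $(ii)$ it even forces $h_C^*$ to vanish, so the right-hand term really is $\opl{k}\cc{\Cp{4}}$ --- but it trades one set of cited inputs for another: you need the full list $\cc{\Cp{m}}$ for $3\leq m\leq 8$, including $\cc{\Cp{8}}$ for case $(v)$, as external input, whereas the paper's route produces $\cc{\Cp{8}}$ (the $k=1$ case of $(v)$) as an output from the image data. Since all of these values are indeed established in the cited references, there is no circularity and no gap.
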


\begin{proof}  Using \Cref{ic of connected sum of projective spaces is trivial} $(i)$, for $ n\leq8 $, we conclude that the group $ I_c(\csum_k\Cp{n}) $ is trivial which implies that the induced map $ d^*:\Th{2n}\to\cc{\csum_k\Cp{n}} $ is injective. Therefore,  for $ \csum_k\Cp{n} $ the short exact sequence derived from \Cref{thm:gen short exact sequence,main thm for connected sum of same copy} for $Y=Top/O$ is as follows:
\begin{equation}\label{short ex cor:sq for cpn}
    \begin{tikzcd}
	0 & {\Th{2n}} & {\cc{\csum_k\Cp{n}}} & {\opl{k-1}\cc{\Cp{n-1}}\opl{}\kr{h_C^*}} & 0~,
	\arrow[from=1-1, to=1-2]
	\arrow["{d^*}", from=1-2, to=1-3]
	\arrow["{\iota^*}", from=1-3, to=1-4]
	\arrow[from=1-4, to=1-5]
\end{tikzcd}
\end{equation}
where $ h_C:\Sp{2n-1}\to \Cp{n-1} $ is the Hopf map. Then we have the following:
\begin{enumerate}[$(i)$]
\item  Consider $ n=4$ in  \eqref{short ex cor:sq for cpn}. It has been proved that $\cc{\Cp{3}} $ is trivial  in \cite[Theorem 2.3 (i)]{RK16ClassificationSmoothStructuresComplexProjectiveSpace}. This implies that $ d^*:\Theta_{8}\to\cc{\csum_k\Cp{4}} $ is an isomorphism.	In particular, we get that  $$ \mathcal{C}(\csum_k\Cp{4}) = \{[\csum_k\Cp{4}], [\csum_k\Cp{4} \csum \Sigma^8]\}, $$ where $ \Sigma^8 $ is an exotic $ 8 $-sphere. This conveys that the $\csum_k\Cp{4}$ has exactly two distinct smooth structures up to concordance, and at most two distinct smooth structures up to diffeomorphism.

\item It is evident from $ (i) $ that $ \cc{\Cp{4}}\cong\Z_2 $ which is also computed in \cite[Theorem 2.3 (ii)]{RK16ClassificationSmoothStructuresComplexProjectiveSpace}. Note that the group $\Th{10}=\Z_2\opl{}\Z_3$. Moreover, by \cite[Proposition 2.6]{Mosher1968} the composition 
		\begin{tikzcd}[sep=small]
			{\Sp{9}} & {\Cp{4}} & {\Sp{8}}
			\arrow["h_C", from=1-1, to=1-2]
			\arrow["d", from=1-2, to=1-3] 
		\end{tikzcd}
		 is null-homotopic. This implies that the induced map $ h_C^*:\cc{\Cp{4}}\to \Th{9}$ is trivial. Therefore, by substituting the corresponding groups for $n=5$ in the sequence \eqref{short ex cor:sq for cpn}, we get the required short exact sequence.

\item  Let $n=6$ in the sequence \eqref{short ex cor:sq for cpn}. As the group $ \Th{12} $ is trivial, we infer that the induced map $ \iota^* $ is a monomorphism, and hence an isomorphism. Further, in \cite[Theorem 2.7 (ii)]{RK-2017-CP5-8}, it is proved that $\cc{\Cp{5}}\cong\Z_2^2\oplus\Z_3$, and the image of the induced map  $ h_C^*:\cc{\Cp{5}}\to\Th{11}$ is isomorphic to $\Z_2 $. Therefore, by the isomorphism theorem of groups, we conclude that $\kr{h_C^*}$ is isomorphic to $\Z_2\oplus\Z_3$. Thus we obtain the required isomorphic.

\item Consider the sequence \eqref{short ex cor:sq for cpn} for $n=7$. In 
\cite[Theorem 2.7(iii)]{RK-2017-CP5-8}, it is proved that $\cc{\Cp{6}}\cong\Z_2\oplus\Z_3$, and the induced map $ h_C^*:\cc{\Cp{6}}\to\Th{13}$ is an epimorphism, where $\Th{13} = \Z_3$. This implies that $\kr{h_C^*}\cong\Z_2$. 

\item It has been proved in \cite[Theorem 2.9]{RK-2017-CP5-8} that the group $\cc{\Cp{7}}\cong\Z_2^2$, and the image of the induced map $ h_C^*:\cc{\Cp{7}}\to\Th{{15}} $ is isomorphic to $ \Z_2 $. Since the group $ \Th{16}=\Z_2 $, we obtain the required short exact sequence from the sequence  \eqref{short ex cor:sq for cpn} by putting $n=8$.

\end{enumerate}
\end{proof}

From \Cref{cor:sq for cpn} $ (i) $ and $ (iii) $ we have $ \cc{\csum_k\Cp{4}}\cong\Z_2 $ and $ \cc{\csum_k\Cp{6}}\cong \Z_2^{2k-1}\opl{}\Z_3^k $, respectively. 
Further, we explicitly compute the concordance structure sets of $\csum_k\Cp{5}$ and $\csum_k\Cp{7}$. The proofs exhibit that the sequences in \Cref{cor:sq for cpn} $ (ii) $ and $ (iv) $ split.

\begin{thm}
Let $ k\in \N $. Then
\begin{enumerate}[$ (i) $]
\item $ \cc{\csum_k\Cp{5}}\cong\Z_2^{k+1}\opl{}\Z_3 $

\item $ \cc{\csum_k\Cp{7}}\cong\Z_2^{k+1}\opl{}\Z_3^{k-1} $ 

\end{enumerate}
\end{thm}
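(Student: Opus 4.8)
The plan is to prove that both short exact sequences of \Cref{cor:sq for cpn} split and then to read off the isomorphism type by counting orders. I take $Y=Top/O$ throughout, so that every group in sight is finite abelian and splits canonically as the direct sum of its $2$-primary and $3$-primary components; it is therefore enough to determine these two components separately.

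The odd-primary component is immediate in each case. In $(i)$ the quotient $\opl{k}\cc{\Cp{4}}\cong\Z_2^{k}$ carries no $3$-torsion, so localizing $0\to\Th{10}\to\cc{\csum_k\Cp{5}}\to\opl{k}\cc{\Cp{4}}\to 0$ at the prime $3$ gives $\cc{\csum_k\Cp{5}}_{(3)}\cong(\Th{10})_{(3)}\cong\Z_3$. In $(ii)$ the kernel $\Th{14}\cong\Z_2$ carries no $3$-torsion, so the $3$-primary part of $\cc{\csum_k\Cp{7}}$ agrees with that of the quotient $\opl{k-1}\cc{\Cp{6}}\oplus\kr{h_C^*}$, which is $\Z_3^{k-1}$. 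A count of orders then shows that in both cases the $2$-primary component has order $2^{k+1}$.

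The crux is to show that this $2$-primary component is elementary abelian, for then its order forces it to be $\Z_2^{k+1}$. Since an abelian group generated by elements of order dividing $2$ has exponent $2$, it suffices to produce a generating set of $\cc{\csum_k\Cp{n}}_{(2)}$ made of such elements. The order-$2$ generator of $(\Th{2n})_{(2)}$, pushed in by $d^*$, is one of them. For the rest I would use the collapse maps $\psi_j\colon\csum_k\Cp{n}\to\Cp{n}$, which map the $j$-th summand by a degree-one map and crush the remaining summands to a point. Writing $\lambda_i$ for the inclusion of the $i$-th wedge summand of the skeleton $\vee_k\Cp{n-1}$ and $\iota_0\colon\Cp{n-1}\hookrightarrow\Cp{n}$ for the skeletal inclusion, the essential observation is that $\psi_j\circ\lambda_i\simeq\iota_0$ when $i=j$ and $\psi_j\circ\lambda_i\simeq\ast$ when $i\neq j$; hence $\iota^*\circ\psi_j^*$ records $\iota_0^*(y)$ in the $j$-th coordinate and $0$ elsewhere. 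Choosing $y_0\in\cc{\Cp{n}}$ of order $2$ with $\iota_0^*(y_0)$ the generator of $\cc{\Cp{n-1}}_{(2)}\cong\Z_2$ — possible because $\cc{\Cp{n-1}}_{(2)}$ lies in the image of $\iota_0^*$ — the classes $\psi_j^*(y_0)$ are order-$2$ lifts of the $k$ standard generators of the $2$-primary quotient. Together with the $d^*$-generator these $k+1$ elements generate $\cc{\csum_k\Cp{n}}_{(2)}$, so it has exponent $2$ and order $2^{k+1}$, i.e. it is $\Z_2^{k+1}$. Reassembling the primary parts gives $\Z_2^{k+1}\oplus\Z_3$ in $(i)$ and $\Z_2^{k+1}\oplus\Z_3^{k-1}$ in $(ii)$.

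The main obstacle is the geometric bookkeeping for $\csum_k\Cp{7}$, where the image of $\iota^*$ is only the subgroup $\kr{\xi^*}\subseteq\opl{k}\cc{\Cp{6}}$ rather than the whole group. One must check that its $2$-primary part is nevertheless all of $\opl{k}\big(\cc{\Cp{6}}_{(2)}\big)\cong\Z_2^{k}$: this holds because $\im{h_C^*}\subseteq\Th{13}\cong\Z_3$ annihilates all $2$-torsion, so $\cc{\Cp{6}}_{(2)}=\kr{h_C^*}$ and each single-coordinate class $(0,\dots,g,\dots,0)$, with $g$ the generator of $\cc{\Cp{6}}_{(2)}$, lies in $\kr{\xi^*}$ and is realized by $\psi_j^*(y_0)$. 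Granting this verification, the $2$-primary splitting — and with it the splitting of the sequences in \Cref{cor:sq for cpn} $(ii)$ and $(iv)$ — goes through, yielding the stated isomorphisms.
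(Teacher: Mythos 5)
Your argument is correct, and it reaches the decisive point --- that the $2$-primary components of $\cc{\csum_k\Cp{5}}$ and $\cc{\csum_k\Cp{7}}$ are elementary abelian of order $2^{k+1}$ --- by a genuinely different route from the paper. The paper performs the same localization at $3$ and likewise reduces the splitting of the sequences in \Cref{cor:sq for cpn} to the non-existence of order-$4$ elements, but it proves this by exhibiting the group (or its $2$-localization) as a quotient of an exponent-$2$ group via auxiliary cofiber sequences: for $\csum_k\Cp{5}$ it uses $\vee_k\Cp{3}\hookrightarrow\csum_k\Cp{5}\to\vee_k\Sp{8}\vee\Sp{10}$, where identifying the cofiber as a wedge of spheres needs Mosher's nullhomotopy of $d\circ h_C$ and surjectivity of $q^*$ needs $\cc{\Cp{3}}=0$; for $\csum_k\Cp{7}$ it uses $\sqcup_{k-1}\Sp{13}\to\csum_k\Cp{7}\to\vee_k\Cp{7}$ together with the vanishing of $\Th{13}$ at the prime $2$. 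You instead manufacture explicit order-$2$ generators: the class $d^*$ of the generator of $\Th{2n}$ plus the pullbacks $\psi_j^*(y_0)$ along the $k$ collapse maps, checked coordinatewise on the skeleton via $\psi_j\circ\lambda_i\simeq\iota_0$ or $\ast$. This keeps the whole proof inside the framework of \Cref{thm:gen short exact sequence} and the collapse-map diagram of \Cref{M to M connected sum N}, is uniform across the two cases, and its only extra inputs --- that $\cc{\Cp{5}}_{(2)}\cong\Z_2^2$ and $\cc{\Cp{7}}\cong\Z_2^2$ are elementary abelian (so an order-$2$ lift $y_0$ exists) and that $\iota_0^*$ hits the generator of $\cc{\Cp{n-1}}_{(2)}$, i.e.\ that $\cc{\Cp{n-1}}_{(2)}=\kr{h_C^*}$ --- are correctly extracted from the cited computations. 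The paper's route gets the exponent bound wholesale from a surjection without tracking individual lifts, at the cost of the additional geometric analysis of the quotient complexes; both arguments are complete.
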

\begin{proof}
The localization at a prime ideal is an exact functor on the category of abelian groups. This property  be extensively used to compute required groups by deducing splittings of the below sequences from \Cref{cor:sq for cpn} $(ii)$ and $(iv)$, 
	\[\begin{tikzcd}
			0 & {\Theta_{10}} & {\cc{\csum_k\Cp{5}}} & {\opl{k}\cc{\Cp{4}}} & 0
			\arrow[from=1-1, to=1-2]
			\arrow["{{d^*}}", from=1-2, to=1-3]
			\arrow["{{\iota^*}}", from=1-3, to=1-4]
			\arrow[from=1-4, to=1-5]
		\end{tikzcd}\]
and
	\[\begin{tikzcd}
			0 & {\Theta_{14}} & {\cc{\csum_k\Cp{7}}} & {\opl{k-1}\cc{\Cp{6}}\opl{}\kr{h^*}} & 0,
			\arrow[from=1-1, to=1-2]
			\arrow["{{d^*}}", from=1-2, to=1-3]
			\arrow["{\iota^*}", from=1-3, to=1-4]
			\arrow[from=1-4, to=1-5]
		\end{tikzcd} \]
respectively.
The primes occurring in the primary decomposition of groups $\cc{\csum_k\Cp{5}}$ and $\cc{\csum_k\Cp{7}}$ are $2$ and $3$. Note that the localization of these short exact sequences at $3$, results in $\cc{\csum_k\Cp{5}}_{(3)}\cong\Z_3$ and $\cc{\csum_k\Cp{7}}_{(3)}\cong\Z_3^{k-1}$ respectively. Thus the splittings of the above sequences are equivalent to the non-existence of order $4$ element in middle groups.
\begin{enumerate}[$(i)$]
    \item Let $ \alpha:\vee_k\Cp{3} \to{\csum_k\Cp{5}} $ be the inclusion map. Since $ \alpha $ is inclusion of the $6$-skeleton, we have 
        $$\faktor{\csum_k\Cp{5}}{\vee_k\Cp{3}}\simeq \left(\faktor{\vee_k\Cp{4}}{\vee_k\Cp{3}} \right) \underset{\beta\circ\xi}{\cup}\D{10}~,$$ 

 where $\xi:\Sp{9}\to \vee_k\Cp{4}$ is the attaching map to obtain $\csum_k\Cp{5}$ and $\beta:\vee_k\Cp{4}\to \sfrac{\vee_k\Cp{4}}{\vee_k\Cp{3}}$ is the quotient map. Furthermore, the space $\sfrac{\vee_k\Cp{4}}{\vee_k\Cp{3}}$ is homotopy equivalent to $\vee_k(\sfrac{\Cp{4}}{\Cp{3}})\simeq\vee_k\Sp{8}$. Hence we have $\beta\simeq \vee_kd$, where $d:\Cp{4}\to\Sp{8}$ is the degree one map. In \cite[Proposition 2.6]{Mosher1968}, it is showed that the composition $d\circ h_C:\Sp{9}\to\Cp{4} \to\Sp{8}$  is nullhomotopic, where $h_C$ is the Hopf map. Therefore, in the following homotopy commutative diagram,
\[\begin{tikzcd}
	& {\underset{k}{\vee}\Sp{9}} \\
	{\Sp{9}} && {\underset{k}{\vee}\Cp{4}} &[2em] {\underset{k}{\vee}\Sp{8}}
	\arrow["{\beta\simeq\underset{k}{\vee}d}"', from=2-3, to=2-4]
	\arrow["\xi"', from=2-1, to=2-3]
	\arrow["p",  from=2-1, to=1-2]
	\arrow["{\underset{k}{\vee}h_C}",  from=1-2, to=2-3]
\end{tikzcd}\]
 we have $\beta\circ\xi\simeq (\vee_k(d\circ h_C))\circ\, p$ to be nullhomotopic. This implies the space $\sfrac{\csum_k\Cp{5}}{\vee_k\Cp{3}}\simeq\vee_k\,\Sp{8}\vee\Sp{10}$. \\
Now consider the cofiber sequence 
   \begin{tikzcd}
	{\vee_k\Cp{3}} & {\csum_k\Cp{5}} & {\vee_k\Sp{8}\vee\Sp{10}}
	\arrow["\alpha", hook, from=1-1, to=1-2]
	\arrow["q", from=1-2, to=1-3]
\end{tikzcd} and apply the functor $[\_,Top/O]$ on it. Then the induced map $q^*:\opl{k}\Th{8}\opl{}\Th{10}\to \cc{\csum_k\Cp{5}}$ is surjective because the group $\cc{\Cp{3}}$ is trivial. Finally, the groups $\Th{8_{(2)}}=\Z_2$ and $\Th{10_{(2)}}=\Z_2$ implies that $\cc{\csum_k\Cp{5}}$ has no order $4$ element. 
\item  Consider the cofiber sequence \begin{tikzcd}
	{\sqcup_{k-1}\Sp{13}} & {\csum_k\Cp{7}} & {\vee_k\Cp{7},}
	\arrow["\gamma", hook, from=1-1, to=1-2]
	\arrow["\rho", from=1-2, to=1-3]
\end{tikzcd} where the map $\gamma$ is the inclusion of $ \sqcup_{k-1}\Sp{n-1} $ on gluing parts during the connected sum operation in $\csum_k \Cp{7} $ such that the cofiber of $ \gamma $ is $ \vee_k\Cp{7} $. In the long exact sequence obtained by functor $ [\_,Top/O] $, the induced map $\rho^*_{(2)}:\opl{k}\Cp{7}_{(2)}\to \cc{\csum_k\Cp{7}}_{(2)}$ is surjective since $\Th{13_{(2)}}$ is trivial. Therefore the group $\cc{\Cp{7}}_{(2)}\cong\Z_2^2$ implies that $\cc{\csum_k\Cp{7}}$ has no order $4$ element.

\end{enumerate} 
This completes the proof. 
\end{proof}


 Now we show analogous results hold in the case of $k$-fold connected sum of quaternionic projective spaces. Recall that $ \Hp{n}=\Hp{n-1}\cup_{h_Q}\D{4n} $, where the attaching map $ h_Q:\Sp{4n-1}\to \Hp{n-1} $ is the Hopf map. Computations of groups $\cc{\Hp{n}}$ for $n\leq 4$ are done in \cite[Theorem 3.1]{RKInertiaGroupsAndSmoothStructuresOnQuaternionicProjectiveSpaces}. 
  Here, by virtue of \Cref{thm:gen short exact sequence,,main thm for connected sum of same copy} in case of $\cc{\csum_k\Hp{n}}$ we obtain the following:

\begin{cor}{\label{cor: quaternionic}}
\begin{enumerate}[(i)]
\item 	There is an isomorphism $$ d^*:\Theta_{8}\lr\cc{\csum_k\Hp{2}}, $$ 
		where $ \Theta_{8}\cong\Z_2 $. 
		
		\item  There is an isomorphism 
		$$\iota^*:\cc{\csum_k\Hp{3}}\lr\opl{k}\cc{\Hp{2}},$$
		where $ \cc{\Hp{2}}\cong\Z_2 $. 
		
		\item There is a short exact sequence
		\[\begin{tikzcd}
			0 & {\Theta_{16}} & {\cc{\csum_k\Hp{4}}} & {\opl{k-1}\cc{\Hp{3}}} & 0,
			\arrow[from=1-1, to=1-2]
			\arrow["{{d^*}}", from=1-2, to=1-3]
			\arrow["{{\iota^*}}", from=1-3, to=1-4]
			\arrow["0", from=1-4, to=1-5]
		\end{tikzcd}\] 
		where $ \Theta_{16}\cong\Z_2 $ and $ \cc{\Hp{3}}\cong\Z_2 $.
\end{enumerate}
\end{cor}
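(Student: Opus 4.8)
The plan is to rerun the argument of \Cref{cor:sq for cpn}, replacing the complex Hopf map by the quaternionic one $h_Q:\Sp{4n-1}\to\Hp{n-1}$ and taking $Y=Top/O$. First I would record that $I_c(\csum_k\Hp{n})=0$ for $n\leq4$: this follows from $I_c(\Hp{n})=0$ (\Cref{ic of connected sum of projective spaces is trivial}$(ii)$) together with the sum formula of \Cref{ic of connected sum is sum of I_c}. Since $\kr{d^*}=I_c(\csum_k\Hp{n})$, this triviality makes the degree-one map $d^*$ injective, so specialising \Cref{main thm for connected sum of same copy} at $s=0$ and $M=\Hp{n}$ (whose real dimension is $4n$) produces the master short exact sequence
\[\begin{tikzcd}
	0 & {\Th{4n}} & {\cc{\csum_k\Hp{n}}} & {\big(\opl{k-1}\cc{\Hp{n-1}}\big)\oplus\kr{h_Q^*}} & 0,
	\arrow[from=1-1, to=1-2]
	\arrow["{d^*}", from=1-2, to=1-3]
	\arrow["{\iota^*}", from=1-3, to=1-4]
	\arrow[from=1-4, to=1-5]
\end{tikzcd}\]
with $h_Q^*:\cc{\Hp{n-1}}\lr\Th{4n-1}$. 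Everything then reduces to substituting the stable groups $\Th{4n}$, the structure sets $\cc{\Hp{n-1}}$, and the kernels $\kr{h_Q^*}$.

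Next I would dispatch the three cases by feeding the relevant values into this sequence. For $n=2$ the bottom skeleton is $\Hp{1}=\Sp{4}$, so $\cc{\Hp{1}}=\Th{4}=0$ and the entire right-hand term dies; the sequence collapses to the isomorphism $d^*:\Th{8}\lr\cc{\csum_k\Hp{2}}$ with $\Th{8}\cong\Z_2$, which is $(i)$. For $n=3$ one has $\Th{12}=0$, so the left-hand term vanishes and $\iota^*$ becomes an isomorphism onto $\big(\opl{k-1}\cc{\Hp{2}}\big)\oplus\kr{h_Q^*}$; inserting $\cc{\Hp{2}}\cong\Z_2$ and $\kr{h_Q^*}\cong\cc{\Hp{2}}$ collapses this to $\iota^*:\cc{\csum_k\Hp{3}}\cong\opl{k}\cc{\Hp{2}}$, giving $(ii)$. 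For $n=4$ the left term is $\Th{16}\cong\Z_2$; inserting $\cc{\Hp{3}}\cong\Z_2$ and $\kr{h_Q^*}=0$ leaves the right-hand term equal to $\opl{k-1}\cc{\Hp{3}}$, which is precisely the sequence of $(iii)$.

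The technical heart, and the step I expect to be the main obstacle, is evaluating the kernels $\kr{h_Q^*}$ for $n=3,4$, where the map behaves oppositely. The cleanest route is to calibrate $h_Q^*$ against the single-copy structure sets $\cc{\Hp{2}}\cong\cc{\Hp{3}}\cong\cc{\Hp{4}}\cong\Z_2$ computed in \cite[Theorem 3.1]{RKInertiaGroupsAndSmoothStructuresOnQuaternionicProjectiveSpaces}, using the $k=1$ instance of the master sequence. For $n=3$ this instance reads $\cc{\Hp{3}}\cong\kr{h_Q^*}$ with $h_Q^*:\cc{\Hp{2}}\to\Th{11}$, forcing $\kr{h_Q^*}\cong\Z_2=\cc{\Hp{2}}$, i.e. $h_Q^*=0$; this is visible directly as well, since each class of $\cc{\Hp{2}}$ is $d^*(\sigma)$ and $h_Q^*d^*=(d\circ h_Q)^*$ with $d\circ h_Q:\Sp{11}\to\Sp{8}$ nullhomotopic, the quaternionic analogue of \cite[Proposition 2.6]{Mosher1968}. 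For $n=4$ the $k=1$ instance exhibits $\cc{\Hp{4}}$ as an extension of $\kr{h_Q^*}$ (with $h_Q^*:\cc{\Hp{3}}\to\Th{15}$) by $\Th{16}\cong\Z_2$; since $\cc{\Hp{4}}\cong\Z_2$ the injected copy of $\Th{16}$ exhausts it, forcing $\kr{h_Q^*}=0$. Once these two evaluations are in place the corollary follows by substitution, and notably no splitting analysis is needed since $(iii)$ is only claimed as a short exact sequence. The genuinely delicate point is justifying the nullhomotopy of the quaternionic Hopf composite (for $n=3$) and the injectivity of $h_Q^*$ on $\cc{\Hp{3}}$ (for $n=4$) intrinsically, rather than merely reading them off the reference.
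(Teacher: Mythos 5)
Your proposal is correct and follows essentially the same route as the paper: triviality of $I_c(\csum_k\Hp{n})$ via the sum formula gives injectivity of $d^*$, the master sequence from \Cref{thm:gen short exact sequence,main thm for connected sum of same copy} is specialised to $\Hp{n}$, and the kernels of $h_Q^*$ are read off from \cite[Theorem 3.1]{RKInertiaGroupsAndSmoothStructuresOnQuaternionicProjectiveSpaces} (the paper cites the triviality/injectivity of $h_Q^*$ directly rather than back-solving from the $k=1$ sequence, but this is the same input). One caution: your proposed ``direct'' justification for $n=3$ via a quaternionic analogue of \cite[Proposition 2.6]{Mosher1968} is not obviously available --- the composite $d\circ h_Q:\Sp{11}\to\Sp{8}$ lies in $\pi_3^s\cong\Z_{24}$ and the vanishing of $Sq^4$ only forces it to be an even multiple of $\nu$, not zero --- so the reference-based evaluation of $h_Q^*$ should be kept as the actual argument.
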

\begin{proof} It follows from the \Cref{ih of connected sum} and \Cref{ic of connected sum of projective spaces is trivial} that the induced map $ d^*:\Th{4n}\to\cc{\csum_k\Hp{n}} $ is injective for $ n\leq4 $. Therefore, using \Cref{thm:gen short exact sequence,,main thm for connected sum of same copy} for $ \csum_k\Hp{n} $ and $Y=Top/O$ we obtain the following short exact sequence
\begin{equation}\label{short ex sq for hpn}
    \begin{tikzcd}
	0 & {\Th{4n}} & {\cc{\csum_k\Hp{n}}} & {\opl{k-1}\cc{\Hp{n-1}}\opl{}\kr{h_Q^*}} & 0
	\arrow[from=1-1, to=1-2]
	\arrow["{d^*}", from=1-2, to=1-3]
	\arrow["{\iota^*}", from=1-3, to=1-4]
	\arrow[from=1-4, to=1-5]
\end{tikzcd}
\end{equation}
where $ h_Q:\Sp{4n-1}\to \Hp{n-1} $ is the Hopf map. Then we have the following:
\begin{enumerate}[$(i)$]
\item The group $ \cc{\Hp{1}} $ is trivial because $ \Hp{1} $ is homeomorphic to $ \Sp{4} $ and $ \cc{\Sp{4}} $ is trivial. Thus for $n=2$, the induced map $ d^* $ in the sequence \eqref{short ex sq for hpn} is an isomorphism.

\item Let $n=3$ in the sequence \eqref{short ex sq for hpn}. The group $\Th{12}$ is trivial, which results in an isomorphism of the map $\iota^*$. Next, it follows from $(i)$ that the group $\cc{\Hp{2}}\cong\Z_2$, and by \cite[Theorem 3.1 (i)]{RKInertiaGroupsAndSmoothStructuresOnQuaternionicProjectiveSpaces} the induced map $ h_Q^*:\cc{\Hp{2}}\to \Th{11} $ is trivial. This completes the proof of $(ii)$.  

\item It is evident from  $(ii)$ that the group $\cc{\Hp{3}}\cong\Z_2$. In \cite[Theorem 3.1 (ii)]{RKInertiaGroupsAndSmoothStructuresOnQuaternionicProjectiveSpaces}, it is proved that the induced map $ h_Q^*:\cc{\Hp{3}}\to\Th{15}$ is a monomorphism. Since the group $ \Th{16}=\Z_2 $, we obtain the required short exact sequence from the sequence  \eqref{short ex sq for hpn} by substituting $n=4$.
\end{enumerate}
\end{proof}
 The existence of a short exact sequence in the corollary below is similar to \Cref{cor: quaternionic} for the connected sum of octonionic projective space $\Op{2}$. 
\begin{cor}
	There is a short exact sequence 
	\[\begin{tikzcd}
		0 & {\Theta_{16}} & {\cc{\csum_k\Op{2}}} & {\opl{k}\Theta_{8}} & 0,
		\arrow[from=1-1, to=1-2]
		\arrow["{{d^*}}", from=1-2, to=1-3]
		\arrow["{{\iota^*}}", from=1-3, to=1-4]
		\arrow[ from=1-4, to=1-5]
	\end{tikzcd}\]
	where $\Theta_{16}\cong \Z_2 $ and $ \Theta_{8}\cong\Z_2 $.
\end{cor}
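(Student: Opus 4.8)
The plan is to mirror the proofs of \Cref{cor:sq for cpn} and \Cref{cor: quaternionic}, treating $\Op{2}$ through its minimal cell structure $\Op{2}=\Sp{8}\cup_{h_O}\D{16}$, where $h_O:\Sp{15}\to\Sp{8}$ is the octonionic Hopf map. The key structural observation is that the $15$-skeleton of $\Op{2}$ is the sphere $\Sp{8}$ itself, so that $[(\Op{2})^{(15)},Top/O]=\cc{\Sp{8}}=\Th{8}$; this is what will make the quotient term a direct sum of copies of $\Th{8}$.

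First I would establish injectivity of $d^*$. By \Cref{ic of connected sum of projective spaces is trivial}$(iii)$ we have $I_c(\Op{2})=0$, and hence by \Cref{ic of connected sum is sum of I_c} the group $I_c(\csum_k\Op{2})=I_c(\Op{2})=0$ is trivial. Since $I_c$ is precisely the kernel of the induced degree one map, this gives that $d^*:\Th{16}\to\cc{\csum_k\Op{2}}$ is injective. Next I would apply \Cref{main thm for connected sum of same copy} with $M=\Op{2}$, $Y=Top/O$ and $s=0$: because $\im{(\Sigma h_O)^*}=I_c(\Op{2})=0$, the cokernel term is $\ckr{(\Sigma\xi)^*}=\Th{16}$, while the kernel term is $\kr{\xi^*}\cong(\opl{k-1}\Th{8})\oplus\kr{h_O^*}$, where $h_O^*:\cc{\Sp{8}}=\Th{8}\to\cc{\Sp{15}}=\Th{15}$ is induced by the Hopf map. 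Together with injectivity of $d^*$ this produces the short exact sequence $0\to\Th{16}\xrightarrow{d^*}\cc{\csum_k\Op{2}}\xrightarrow{\iota^*}(\opl{k-1}\Th{8})\oplus\kr{h_O^*}\to 0$.

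The crux, and what I expect to be the main obstacle, is to show $\kr{h_O^*}=\Th{8}$, i.e. that the induced Hopf map $h_O^*:\Th{8}\to\Th{15}$ is trivial. This is exactly the octonionic counterpart of the vanishing of $h_Q^*$ invoked in \Cref{cor: quaternionic}$(ii)$ and of Mosher's nullhomotopy used in \Cref{cor:sq for cpn}$(ii)$. I would prove it by passing to stable maps: since $Top/O$ is an infinite loop space, for $g\in\pi_8(Top/O)$ the adjunction identifies the composite $g\circ h_O$ with the action of the stable class $\Sigma^\infty h_O=\sigma\in\pi_7^s$ on $g$, so $h_O^*$ is the $\pi_*^s$-module multiplication by $\sigma$ on $\Th{8}$. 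One then checks that $\Th{8}\cdot\sigma=0$ in $\Th{15}$. Alternatively, and in keeping with the citation used in the quaternionic case, this triviality is part of the computation of $\cc{\Op{2}}$ carried out in \cite{RKInertiaGroupsAndSmoothStructuresOf2nmfld}, which I would simply cite.

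Granting $\kr{h_O^*}=\Th{8}$, the kernel term collapses to $(\opl{k-1}\Th{8})\oplus\Th{8}=\opl{k}\Th{8}$, and the sequence of the second paragraph becomes the asserted short exact sequence $0\to\Th{16}\xrightarrow{d^*}\cc{\csum_k\Op{2}}\xrightarrow{\iota^*}\opl{k}\Th{8}\to 0$. The identifications $\Th{16}\cong\Z_2$ and $\Th{8}\cong\Z_2$ are standard and complete the proof.
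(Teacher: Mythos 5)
Your proposal follows essentially the same route as the paper: triviality of $I_c(\csum_k\Op{2})$ via \Cref{ic of connected sum is sum of I_c} gives injectivity of $d^*$, \Cref{main thm for connected sum of same copy} gives the short exact sequence with kernel term $(\opl{k-1}\Th{8})\oplus\kr{h_O^*}$, and the crux is the vanishing of $h_O^*:\Th{8}\to\Th{15}$. For that last step the paper simply cites \cite[Theorem 4.5]{RKHomotopyInertiaGroupsAndTangentialStructures} rather than carrying out the stable multiplication-by-$\sigma$ computation you sketch (which you leave unchecked), but your fallback of citing the literature is exactly what the paper does.
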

\begin{proof} 
Recall that $ \Op{2}=\Op{1}\cup_{h_O}\D{16} $, where the attaching map $ h_O:\Sp{15}\to \Op{1} $ is the Hopf map. In \cite[Theorem 2.3 (iii)]{RKInertiaGroupsAndSmoothStructuresOf2nmfld}, it is proved that the concordance inertia group of $ \Op{2} $ is trivial. By \Cref{ic of connected sum is sum of I_c}, we conclude that for all $ k\in \N $, the concordance inertia group of $ \csum_k\Op{2} $ is trivial. Hence the induced map $d^*:\Th{16}\to\cc{\csum_k\Op{2}}$ is injective. Hence using \Cref{thm:gen short exact sequence,main thm for connected sum of same copy} for $ \cc{\csum_k\Op{2}} $, we obtain the following short exact sequence:
\[\begin{tikzcd}
	0 & {\Th{16}} & {\cc{\csum_k\Op{2}}} & {\opl{k-1}\cc{\Op{1}}\opl{}\kr{h_O^*}} & 0
	\arrow[from=1-1, to=1-2]
	\arrow["{d^*}", from=1-2, to=1-3]
	\arrow["{\iota^*}", from=1-3, to=1-4]
	\arrow[from=1-4, to=1-5]
\end{tikzcd}\]

  Moreover, by \cite[Theorem 4.5]{RKHomotopyInertiaGroupsAndTangentialStructures}, the induced map $ h^*_O:\Th{8}\to\Th{15} $, maps the non-trivial element of $ \Th{8} $ to a null homotopic map $ f:\Sp{15}\to\Sp{8}\to Top/O $. As a result, we obtain the required short exact sequence.
\end{proof}
 Similarly using \cite[Theorem 3.1 (i)]{RKInertiaGroupsAndSmoothStructuresOnQuaternionicProjectiveSpaces} and \cite[Theorem 2.7]{RK-2017-CP5-8}, we have the concluding result.
\begin{cor}
$ \cc{(\csum_k\Cp{6})\csum(\csum_l\Hp{3})}\cong\Z_2^{2k+l-1}\opl{}\Z_3^k $
\end{cor}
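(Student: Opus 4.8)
The plan is to apply \Cref{thm:gen short exact sequence} with $Y=Top/O$ and $s=0$ to the $(k+l)$-fold connected sum $N:=(\csum_k\Cp{6})\csum(\csum_l\Hp{3})$, which is a closed oriented smooth $12$-manifold since both $\Cp{6}$ and $\Hp{3}$ have real dimension $12$. Here the index set records $k$ copies of $\Cp{6}$, with $11$-skeleton $\Cp{5}$ and Hopf attaching map $h_C:\Sp{11}\to\Cp{5}$, together with $l$ copies of $\Hp{3}$, with $11$-skeleton $\Hp{2}$ and Hopf attaching map $h_Q:\Sp{11}\to\Hp{2}$. The theorem supplies a short exact sequence
\[
0 \to \ckr{(\Sigma\xi)^*} \to \cc{N} \to \kr{\xi^*} \to 0,
\]
whose left-hand term is $\pii{12}(Top/O)/\sum_i\im{(\Sigma h_i)^*}=\Th{12}/(\cdots)$. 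Since $\Th{12}=0$, this cokernel vanishes, so $\cc{N}\cong\kr{\xi^*}$, where by \eqref{eq:ker of xi is ker of sum of hi} the relevant map is the summed restriction
\[
\xi^*=\us{i}{\sum}h_i^*:\big(\opl{k}\cc{\Cp{5}}\big)\opl{}\big(\opl{l}\cc{\Hp{2}}\big)\lr\Th{11}.
\]

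Thus the whole computation reduces to identifying $\kr{\xi^*}$, and I would carry this out one prime at a time, as in the splitting arguments earlier in this section, since a finite abelian group is determined by its localizations at the only relevant primes $2$ and $3$ (those dividing the orders of $\cc{\Cp{5}}\cong\Z_2^2\opl{}\Z_3$ and $\cc{\Hp{2}}\cong\Z_2$). The two inputs are: $h_Q^*$ is the trivial map by \Cref{cor: quaternionic}\,(ii), so each $\Hp{2}$-summand lies entirely in $\kr{\xi^*}$; and $h_C^*:\cc{\Cp{5}}\to\Th{11}$ has image $\cong\Z_2$ with $\kr{h_C^*}\cong\Z_2\opl{}\Z_3$ by \Cref{cor:sq for cpn}\,(iii).

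At the prime $3$ the image of $h_C^*$ is $2$-torsion, hence $(h_C^*)_{(3)}=0$, while $\cc{\Hp{2}}_{(3)}=0$; therefore $\xi^*_{(3)}$ is the zero map out of $\opl{k}\cc{\Cp{5}}_{(3)}\cong\Z_3^k$, giving $\kr{\xi^*}_{(3)}\cong\Z_3^k$. At the prime $2$ the domain localizes to the elementary abelian group $\big(\opl{k}\Z_2^2\big)\opl{}\big(\opl{l}\Z_2\big)\cong\Z_2^{2k+l}$, and $\xi^*_{(2)}$ is a surjection onto $\im{h_C^*}\cong\Z_2$ (surjective because $k\geq 1$). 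The kernel of a surjective homomorphism $\Z_2^{2k+l}\twoheadrightarrow\Z_2$ is an index-two, hence elementary abelian, subgroup of rank $2k+l-1$, so $\kr{\xi^*}_{(2)}\cong\Z_2^{2k+l-1}$. Reassembling the two localizations yields $\kr{\xi^*}\cong\Z_2^{2k+l-1}\opl{}\Z_3^k$, and therefore $\cc{N}\cong\Z_2^{2k+l-1}\opl{}\Z_3^k$.

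The only genuinely delicate point is the prime-$2$ count: one must verify that $\xi^*_{(2)}$ is onto, so that the kernel has corank exactly one, and then invoke that every index-two subgroup of $\Z_2^{2k+l}$ is again elementary abelian of rank $2k+l-1$. Passing to localizations is exactly what makes this clean, as it sidesteps having to track how the order-$2$ subgroup $\im{h_C^*}$ sits inside the larger group $\Th{11}$, and removes any worry about whether the extension in \Cref{thm:gen short exact sequence} splits integrally.
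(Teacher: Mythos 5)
Your proposal is correct and takes essentially the same route as the paper: the paper's one-line proof invokes \Cref{thm:gen short exact sequence} together with the vanishing of $\Th{12}$, the triviality of $h_Q^*:\cc{\Hp{2}}\to\Th{11}$, and the known image and kernel of $h_C^*:\cc{\Cp{5}}\to\Th{11}$ --- exactly the inputs you use. Your prime-by-prime evaluation of $\kr{\xi^*}$ is just a clean way of organizing the same elementary kernel count.
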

Observe that, this result also follows from \Cref{cor: C of n-1 connected 2n mfld}.

\vspace{1cm}
\noindent\textbf{Acknowledgement: }
The author would like to express gratitude to Prof. R. Kasilingam for valuable guidance and support throughout the development of this paper. Additionally, the author acknowledges the University Grants Commission for their support through the Senior Research Fellowship.


\begin{thebibliography}{10}

\bibitem{AravindaExoticStructuresAndQuaternionicHyperbolicManifolds}
C.~S. Aravinda and F.~T. Farrell.
\newblock Exotic structures and quaternionic hyperbolic manifolds.
\newblock In {\em Algebraic groups and arithmetic}, pages 507--524. Tata Inst. Fund. Res., Mumbai, 2004.

\bibitem{RKInertiaGroupsAndSmoothStructuresOnQuaternionicProjectiveSpaces}
Samik Basu and Ramesh Kasilingam.
\newblock Inertia groups and smooth structures on quaternionic projective spaces.
\newblock {\em Forum Math.}, 34(2):369--383, 2022.

\bibitem{HirschMazur}
MORRIS~W. HIRSCH and BARRY MAZUR.
\newblock {\em Smoothings of Piecewise Linear Manifolds. (AM-80)}.
\newblock Princeton University Press, 1974.

\bibitem{HomotopyofManifoldsStablizedbyProjectiveSpaces}
Ruizhi Huang and Stephen Theriault.
\newblock Homotopy of manifolds stabilized by projective spaces, 2022.

\bibitem{RK16ClassificationSmoothStructuresComplexProjectiveSpace}
Ramesh Kasilingam.
\newblock Classification of smooth structures on a homotopy complex projective space.
\newblock {\em Proc. Indian Acad. Sci. Math. Sci.}, 126(2):277--281, 2016.

\bibitem{RKInertiaGroupsAndSmoothStructuresOf2nmfld}
Ramesh Kasilingam.
\newblock Inertia groups and smooth structures of {$(n-1)$}-connected {$2n$}-manifolds.
\newblock {\em Osaka J. Math.}, 53(2):309--319, 2016.

\bibitem{RKHomotopyInertiaGroupsAndTangentialStructures}
Ramesh Kasilingam.
\newblock Homotopy inertia groups and tangential structures.
\newblock {\em JP Journal of Geometry and Topology}, 20:91--114, 2017.

\bibitem{RK-2017-CP5-8}
Ramesh Kasilingam.
\newblock Smooth structures on complex projective spaces.
\newblock {\em arXiv:1701.07592}, 2017.

\bibitem{Kasprowski2018FourmanifoldsUT}
Daniel Kasprowski, Mark Powell, and Peter Teichner.
\newblock Four-manifolds up to connected sum with complex projective planes.
\newblock {\em American Journal of Mathematics}, 144:118 -- 75, 2018.

\bibitem{Kawakubo}
Katsuo Kawakubo.
\newblock Inertia groups of low dimensional complex projective spaces and some free differentiable actions on spheres. {I}.
\newblock {\em Proc. Japan Acad.}, 44:873--875, 1968.

\bibitem{KervaireMilnorGroupsOfHomotopySpheres}
Michel~A. Kervaire and John~W. Milnor.
\newblock Groups of homotopy spheres. {I}.
\newblock {\em Ann. of Math. (2)}, 77:504--537, 1963.

\bibitem{KirbySiebenmannFoundationalEssays}
Robion~C. Kirby and Laurence~C. Siebenmann.
\newblock {\em Foundational essays on topological manifolds, smoothings, and triangulations}.
\newblock Annals of Mathematics Studies, No. 88. Princeton University Press, Princeton, N.J.; University of Tokyo Press, Tokyo, 1977.
\newblock With notes by John Milnor and Michael Atiyah.

\bibitem{SurgeryAndDuality}
Matthias Kreck.
\newblock Surgery and duality.
\newblock {\em Annals of Mathematics}, 149(3):707--754, 1999.

\bibitem{kreck2015thoughts}
Matthias Kreck.
\newblock Thoughts about a good classification of manifolds.
\newblock In {\em Proceedings of the G{\"o}kova Geometry-Topology Conference}, pages 187--201, 2015.

\bibitem{kui}
N.~H. Kuiper.
\newblock On the smoothings of triangulated and combinatorial manifolds.
\newblock {\em Differential and Combinatorial Topology}, pages 3--22, 1965.

\bibitem{Cairns-Hirsch-Mazur}
Timothy Lance.
\newblock Differentiable structures on manifolds.
\newblock In {\em Surveys on surgery theory, {V}ol. 1}, volume 145 of {\em Ann. of Math. Stud.}, pages 73--104. Princeton Univ. Press, Princeton, NJ, 2000.

\bibitem{Mosher1968}
Robert~E. Mosher.
\newblock Some stable homotopy of complex projective space.
\newblock {\em Topology}, 7:179--193, 1968.

\bibitem{mun}
James~R. Munkres.
\newblock {Concordance of differentiable structures---two approaches.}
\newblock {\em Michigan Mathematical Journal}, 14(2):183 -- 191, 1967.

\bibitem{Sullivan1996}
D.~P. Sullivan.
\newblock {\em Triangulating and Smoothing Homotopy Equivalences and Homeomorphisms. Geometric Topology Seminar Notes}, pages 69--103.
\newblock Springer Netherlands, Dordrecht, 1996.

\bibitem{WilkensInertiaGroupSumIsNotSumOfInertia}
David~L. Wilkens.
\newblock On the inertia groups of certain manifolds.
\newblock {\em J. London Math. Soc. (2)}, 9:537--548, 1974/75.

\end{thebibliography}
\end{document}